\newtheorem{theorem}{Theorem}[section]
\newtheorem{lemma}[theorem]{Lemma}
\theoremstyle{definition}
\newtheorem{definition}[theorem]{Definition}
\theoremstyle{remark}
\newtheorem{remark}[theorem]{Remark}
\theoremstyle{notation}
\numberwithin{equation}{section}
\theoremstyle{corollary}
\newtheorem{corollary}[theorem]{Corollary}
\newcommand{\Map}{\mathrm{Map}}
\newcommand{\map}{\mathrm{map}}
\newcommand{\HOM}{\mathbf{HOM}}
\newcommand{\sSet}{\mathbf{sSet}}
\newcommand{\Cat}{\mathbf{Cat}}
\newcommand{\Top}{\mathbf{Top}}
\newcommand{\Ch}{\mathbf{Ch}}
\newcommand{\C}{\mathbf{C}}
\newcommand{\D}{\mathbf{D}}
\newcommand{\A}{\mathbf{A}}
\newcommand{\G}{\mathrm{G}}
\newcommand{\M}{\mathbf{M}}
\newcommand{\catX}{\mathbf{X}_{\downarrow\Delta}}
\newcommand{\N}{\mathrm{N}}
\newcommand{\sC}{\widehat{\mathbf{C}}}
\newcommand{\sM}{\widehat{\mathbf{M}}}
\newcommand{\sG}{\widehat{\mathbf{G}}}
\newcommand{\holim}{\mathrm{holim}}
\newcommand{\hocolim}{\mathrm{hocolim}}
\newcommand{\colim}{\mathrm{colim}}
\newcommand{\Ob}{\mathrm{Ob}}
\begin{document}

\title{Moduli space of fibrations  in the category of simplicial presheaves}

\author{Ilias Amrani}
\address{Department of Mathematics\\ Masaryk University\\ Czech Republic}
\email{ilias.amranifedotov@gmail.com}
\email{amrani@math.muni.cz}
\thanks{Supported by the project CZ.1.07/2.3.00/20.0003
of the Operational Programme Education for Competitiveness of the Ministry
of Education, Youth and Sports of the Czech Republic.
}

\thanks{}

\subjclass[2000]{Primary 55}



\keywords{Moduli space, $\infty$-presheaves, G-principle bundles }

\begin{abstract}
We describe the moduli space of extensions in the model category of simplicial presheaves. This article can be seen as a generalization of \textit{Blomgren-Chacholski} results in the case of simplicial sets. Our description of the moduli space of extensions treat the equivariant and the nonequivariant case in the same setting. As a new result, we describe the moduli space of M-bundles over a fixed space X, when M is a simplicial monoid. Moreover, the moduli space of M-bundles is classified by the classifying space of the simplicial submonoid generated by homotopy invertible elements of M. We give a general interpretation of generalized cohomology theories (connective) in terms of classification of principle bundles. We also construct categorical model for the classifying space BG and EG when G is a simplicial (topological) monoid group like.

\end{abstract}

\maketitle
\section*{Results}
We extend the work done in \cite{blomgren} to a more general setting, we construct a unique theory for the equivariant and nonequivaruant framework, and even more. As consequence, we describe the moduli space of $M$-principle bundle, where $M$ is a simplicial monoid or even more generally $M$ is a simplicial presheaf on some simplicial category. Let $\C$ a simplicial category and $X$ a simplicial set, and $F:\C^{op}\rightarrow\sSet$ any simplicial functor (the category of simplicial presheaves is denoted by $\sC$), then the moduli space $\N_{\bullet} \mathbf{Ext}_{\C}(X,F)$ (cf \ref{Ext}) of extensions of $X$ by $F$ is described as follows:\\\\
\textbf{Theorem I:~}\ref{main}, \ref{main2}
 \textit{The moduli space of extensions is classified by the classifing space of homotopy auto-equivalences of 
 $F$ i.e., the map
$$ \N_{\bullet}\mathbf{Ext}_{\C}(F,X)\sim\Map\big(X,\mathrm{B}Aut^{h}(F)\big).$$ \\
}
Here, the notation $\mathrm{B}Aut^{h}(F)$ means the mapping space of auto-equivalences of a fibrant cofibrant replacement of $F\in \sC$. As an application we can classify the principle $M$-bundles \ref{M-bundle}, when $M$ is a \textit{simplicial monoid}.\\\\
\textbf{Theorem II:~}\ref{ap1}
\textit{Let $M$ be a simplicial monoid and $\mathbf{M}$ the simplicial category with one object $\ast$ and $\mathbf{M}(\ast,\ast)=M$, then
$$\N_{\bullet}\mathbf{Ext}_{\mathbf{M}}(M,X)\sim\Map(X,\mathrm{B}M^{\star}),$$
where $M^{\star}$ (cf \ref{invers}) is the simplicial sub monoid of $M$ of homotopy invertible "elements" . 
}\\\\
If $E$ is an infinity loop space, first, we can define the iterated bar construction $\mathrm{B}^{n}E$ (classifying space) is still an infinity loop space and so we can strictify it to a simplicial monoid group like. As a consequence we obtain the following result:\\\\
\textbf{Corollary:~}\ref{ap3}
\textit{Let $X$ and $E$ as before, then $E^{n}(X)=[X,\mathrm{B}^{n}E]$ is in bijection with the homotopy equivalence classes of $\mathrm{B}^{n-1}E$-principle bundles over $X$. In other words:
$[X,\mathrm{B}^{n}E]\simeq \mathbf{Ext}^{0}_{\mathrm{B}^{n-1}E}(\mathrm{B}^{n-1}E,X):=\pi_{0}\N_{\bullet}\mathbf{Ext}_{\mathrm{B}^{n-1}E}(\mathrm{B}^{n-1}E,X).$\\\\
}
The last corollary gives an interpretation of $H^{n}(X,G)$, the cohomology group with coefficient in some abelian group $G$, the elements of the cohomology group are in bijection with the equivalence classes of objects in the category\\
 $\mathbf{Ext}_{\mathrm{K}(n-1,G)}(\mathrm{K}(n-1,G),X)$, where $\mathrm{K}(n,G)$ is the nth Eilenberg-Maclane space of the group $G$. In the last section \ref{section3}, we prove two result, the first one is an explicit description of the mapping space in the model category of simplicial presheaves $\sC$. Let $F$ be a presheaf, we denote by $\sC_{F}^{we}$ the subcategory of presheaves equivalent to $F$ \ref{weak}.  \\\\
\textbf{Theorem III:} \ref{wMX}
\textit{
For any (fibrant-cofibrant) object $F$ in $\sC$, we have an equivalence of simplicial sets $\N_{\bullet}\sC_{F}^{we}\sim\mathrm{B}Aut^{h}(F)$, where $Aut^{h}(F)$ mapping space of autoequivalence of $F$ i.e., $\map_{\sC}(F,F)^{\star}$. }\\\\
We should remark that in \cite{blomgren}, the authors prove a similar result for any model category, but we think that our result in the case of the model category of simplicial presheaves is more conceptual.  
The second result is about constructing three explicit categories, which connect different mapping spaces of different model categories i.e., the mapping space in the model category of simplicial presheves, and the mapping space of model category of simplicial (topological) categories, these results are summarized in the following lemma.\\\\
\textbf{Lemma: }\textit{ Let G, be a topological monoid group like, then there two categories and a forgetful functor $U:\mathbf{EG}\rightarrow \mathbf{BG}$ such that:\\
1- $\N_{\bullet}\mathbf{BG}\sim BG$ \ref{BG},\\
2- $\N_{\bullet}\mathbf{EG}$ is contractible, \ref{EG}\\
3-  and the realization of the nerve of the homotopy fiber of $U$ is equivalent to $G$ \ref{fiber}. 
}

\section{Preliminaries}
The main goal of this paragraph is to define a $\textit{projective model structure}$ and extra structures on the category of enriched functors $[\C^{op},\sSet]:=\sC$ (or $[\C,\sSet]:=\sC$ it depends on the context) where $(\sSet, \times, \Map)$ is the symmetric monoidal closed model category of simplicial sets, and  $\C$ is  a enriched category over $\sSet$.\\
Lets describe in more detail the category $[\C,\sSet]:=\sC$, the objects are topological functors 
$F: \C\rightarrow \sSet$ i.e., 
\begin{itemize}
\item A map $\Ob F:\Ob \C\rightarrow \Ob \sSet$. 
\item For any pair $(x,y)$ of objects in $\C$, we have a compatible (unit and associativity axioms) continues maps of spaces:
$$F_{x,y}: \C(x,y)\rightarrow \Map(Fx,Fy)$$
or equivalently, a compatible continues map $F_{x,y}:\C(x,y)\times Fx\rightarrow Fy. $
\end{itemize}
A morphism in $\sC$ between two functors $F$ and $G$ is a natural transformation $H: F\rightarrow G$ such that for any object $x\in \Ob\C$ the following diagram commutes:
$$
 \xymatrix{
 \C(x,y)\times Fx\ar[r]^-{F_{x,y}}\ar[d]_-{id\times H_{x}} & Fy\ar[d]^-{H_{y}}\\
 \C(x,y)\times Gx \ar[r]_-{G_{x,y}} & Gy
 }
 $$
 The category $\sC$ is a simplicial category in a natural way. For any simplicial set $X$ and any simplicial functor $F:\C\rightarrow\sSet$ we define the tensor product $X\times F$ and the cotensor $F^{X}$ degree by degree i.e., $(X\times F)(c)=X\times F(c)$ and $F^{X}(c)=\Map(X,F(c))$ for any object $c\in \C$.  
 
 \begin{theorem}[projective model structure]\label{projective}\cite{toen2005homotopical}
The category $\sC$ is cofibrantly generated simplicial model category, where $H:F\rightarrow G$ is a weak equivalence (fibration) if and only if for any $x\in\Ob\C$, the map $H_{x}:Fx\rightarrow Gx$ is a weak equivalence (fibration) in $\sSet$. Moreover, the mapping space in $\sC$ is given by 
$$\map_{\sC}(F,G)_{n}=\hom_{\sC}( \Delta^{n}\times F, G).$$
\end{theorem}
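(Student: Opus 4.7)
The plan is to transfer the projective model structure from $\sSet$ along the family of evaluation functors $\mathrm{ev}_c : \sC \to \sSet$, $F \mapsto F(c)$, for $c \in \Ob\C$. Each evaluation functor admits a left adjoint, namely the ``free'' simplicial presheaf functor $K \mapsto K \times \C(c,-)$, where the enriched action on the second factor provides the required simplicial functor structure. The strategy is Kan's recognition theorem: declare $H : F \to G$ to be a weak equivalence (resp. fibration) precisely when $H_x$ is one in $\sSet$ for every $x \in \Ob\C$, and produce explicit generating sets by applying the free functors to the generators of $\sSet$.

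Concretely, I would take
\[
I_{\sC} = \{\, \C(c,-) \times \partial\Delta^n \hookrightarrow \C(c,-) \times \Delta^n \;:\; c \in \Ob\C,\; n \geq 0 \,\},
\]
\[
J_{\sC} = \{\, \C(c,-) \times \Lambda^n_k \hookrightarrow \C(c,-) \times \Delta^n \;:\; c \in \Ob\C,\; 0 \leq k \leq n \,\},
\]
as generating cofibrations and trivial cofibrations. The domains are small (levelwise finite), so the small object argument applies. The nontrivial verification is that transfinite compositions of pushouts of maps in $J_{\sC}$ are levelwise weak equivalences: evaluating such a pushout at any $x \in \Ob\C$ yields a pushout along $\Lambda^n_k \times \C(c,x) \hookrightarrow \Delta^n \times \C(c,x)$ in $\sSet$, which is an anodyne extension since $\sSet$ is a monoidal model category. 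Left lifting against $I_{\sC}$ then produces the cofibrations, and one checks the two-out-of-three and retract axioms levelwise.

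For the simplicial enrichment one defines tensor and cotensor objectwise, $(K \times F)(c) = K \times F(c)$ and $F^K(c) = \Map(K, F(c))$, and the $\C$-functoriality is inherited from that of $F$. The pushout-product axiom SM7 reduces to the pointwise SM7 axiom in $\sSet$, since pushouts and the monoidal structure are computed objectwise. Finally, the mapping space formula drops out of the standard adjunction: by the defining property of the tensor,
\[
\hom_{\sSet}(\Delta^n, \map_{\sC}(F,G)) = \hom_{\sC}(\Delta^n \times F, G),
\]
and taking $n$-simplices of the left side gives $\map_{\sC}(F,G)_n$. The main obstacle I anticipate is the interaction of the enriched hom $\C(c,x)$ (an arbitrary simplicial set) with the acyclicity of horn inclusions: one must rely on the monoidal model structure of $\sSet$ to guarantee that multiplying an anodyne map by any simplicial set stays anodyne. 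Everything else is bookkeeping that follows the classical treatment of projective model structures on enriched functor categories.
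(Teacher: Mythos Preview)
Your outline is correct and follows the standard transfer argument for projective model structures on enriched functor categories. Note, however, that the paper does not supply its own proof of this theorem: it is stated with a citation to \cite{toen2005homotopical} and treated as background. So there is no in-paper argument to compare against; your sketch is precisely the kind of proof one finds in the cited reference (and in Hirschhorn, Lurie, or Schwede--Shipley in analogous settings). The one point worth making explicit in a full write-up is the smallness hypothesis: the domains $\C(c,-)\times\partial\Delta^n$ and $\C(c,-)\times\Lambda^n_k$ are small in $\sC$ because $\sSet$ is locally presentable and colimits in $\sC$ are computed objectwise, not merely because they are ``levelwise finite'' (the hom-objects $\C(c,x)$ need not be finite simplicial sets).
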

\begin{remark}
The same theorem is true if we replace $\sSet$ by $\Top$ (topological categories) or by 
$\Ch_{k}$ (dg-categories) or any other good symmetric monoidal model category. 
\end{remark}
\begin{lemma}[Yoneda]
Let $\C$ be a simplicial category, then functor $i:\C^{op}\rightarrow \sC$, which takes an object $c$ to $\C(-,c)$, is fully faithful in the enriched sense, i.e., $\C(c,d)=\map_{\sC}(i(c),i(d))$ and more generally 
$$\map_{\sC}(i(c), F)=F(c).$$ 
 \end{lemma}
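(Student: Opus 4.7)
The claim is the enriched Yoneda lemma for $\sSet$-enriched functors, and my plan is to adapt the classical Yoneda argument using the explicit description of the mapping space from Theorem \ref{projective}. Specializing $F=i(d)=\C(-,d)$ in the second (stronger) equality recovers the first (full-faithfulness) equality, so it suffices to exhibit a natural isomorphism of simplicial sets $\map_{\sC}(i(c),F)\cong F(c)$.

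By Theorem \ref{projective}, an $n$-simplex of $\map_{\sC}(i(c),F)$ is a natural transformation $H:\Delta^{n}\times i(c)\to F$ in $\sC$, i.e.\ a collection of maps of simplicial sets $H_{x}:\Delta^{n}\times\C(x,c)\to F(x)$ compatible with the enriched $\C$-action in the sense of the square displayed in the Preliminaries. I would construct the would-be inverse pair as follows. For the map $\varphi_{n}:\map_{\sC}(i(c),F)_{n}\to F(c)_{n}$, send $H$ to the composite
$$\Delta^{n}\xrightarrow{\mathrm{id}\times u_{c}}\Delta^{n}\times\C(c,c)\xrightarrow{H_{c}}F(c),$$
where $u_{c}$ picks out the identity of $c$, i.e.\ evaluate at the identity. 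For the inverse $\psi_{n}:F(c)_{n}\to\map_{\sC}(i(c),F)_{n}$, given $\alpha:\Delta^{n}\to F(c)$ take its $x$-component to be
$$\Delta^{n}\times\C(x,c)\xrightarrow{\alpha\times\mathrm{id}}F(c)\times\C(x,c)\longrightarrow F(x),$$
the last arrow being the enriched action adjoint to $F_{x,c}:\C(x,c)\to\Map(F(x),F(c))$ (or its appropriately-variant-ed version).

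Both $\varphi_{n}$ and $\psi_{n}$ are natural in $[n]\in\Delta$, so the content is the bijection on each level. The identity $\varphi_{n}\psi_{n}=\mathrm{id}$ follows immediately from the unit axiom of an enriched functor, which says the identity of $\C(c,c)$ acts as the identity on $F(c)$. The harder direction $\psi_{n}\varphi_{n}=\mathrm{id}$ says that every enriched natural transformation out of a representable is determined by its value on the identity element; I would verify it by applying the compatibility square of $H$ to the composition $\C(x,c)\times\C(c,c)\to\C(x,c)$ and using that any $f\in\C(x,c)_{m}$ factors through $\mathrm{id}_{c}$ under this composition. The main obstacle, and the only real difference from the one-line $\mathbf{Set}$-enriched proof, is keeping track of the $\Delta^{n}$-parameter carefully: one must ensure the naturality square makes sense level-wise in the simplicial direction and that face/degeneracy compatibility is automatic from the simplicial nature of $u_{c}$ and the action map. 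Once this bookkeeping is in hand, the argument is formally the same as the classical enriched Yoneda lemma.
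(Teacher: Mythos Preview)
Your argument is the standard enriched Yoneda proof and is correct; the only thing to note is that the paper does not actually prove this lemma at all---it is stated without proof as a well-known fact, so there is nothing to compare against. Your sketch supplies exactly the expected details.
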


 
\subsection{the mapping space as a moduli space}
The mapping spaces in $\sC$ has a very nice description as the nerve of some category. 
Let $F$ a fibrant object $\sC$ and $c\in \C$, define the category $F_{\downarrow c}$ as follow:
\begin{enumerate}
\item Object are morphisms $f:~i(c)\times\Delta^{n}\rightarrow F$.
\item morphisms are maps $\sigma: \Delta^{n}\rightarrow \Delta^{m}$ such that 
$f_{1}\circ( id_{c}\times\sigma )= f_{2}.$ 

\end{enumerate}
The nerve of the the category $F_{\downarrow c} $ is related to the mapping space in the projective model category $\sC$, more precisely we have the following equivalence

\begin{lemma}\label{mapping}
We have the following isomorphisms and equivalences:
$$\N_{\bullet} (F_{\downarrow c}) \sim \map_{\sC}(i(c),F)\simeq F(c).$$
\end{lemma}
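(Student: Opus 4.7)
The second equivalence $\map_{\sC}(i(c),F)\simeq F(c)$ is exactly the enriched Yoneda lemma already stated in the excerpt, so nothing needs to be proved there. The content is in the first equivalence, and the plan is to recognize $F_{\downarrow c}$ as a category of simplices and then invoke a classical result about such categories.

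First I would unwind what an object of $F_{\downarrow c}$ really is. By the tensor/cotensor adjunction built into the simplicial structure of $\sC$ (and the explicit description of the mapping space given in Theorem~\ref{projective}), there is a natural bijection
\begin{equation*}
\hom_{\sC}\bigl(i(c)\times\Delta^{n},F\bigr)\;\cong\;\hom_{\sSet}\bigl(\Delta^{n},\map_{\sC}(i(c),F)\bigr)\;=\;\map_{\sC}(i(c),F)_{n}.
\end{equation*}
Hence the objects of $F_{\downarrow c}$ correspond bijectively to the simplices of the simplicial set $K:=\map_{\sC}(i(c),F)$, the assignment sending $f\colon i(c)\times\Delta^{n}\to F$ to its adjoint transpose $\widehat{f}\in K_{n}$. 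A morphism of $F_{\downarrow c}$, by definition a simplicial map $\sigma\colon\Delta^{n}\to\Delta^{m}$ compatible with the structure maps, translates under this bijection into a simplex operator $\sigma^{*}\colon K_{m}\to K_{n}$ satisfying $\sigma^{*}\widehat{f_{1}}=\widehat{f_{2}}$. Thus $F_{\downarrow c}$ is isomorphic to the Grothendieck construction / category of simplices $\Delta\!\downarrow\! K$ of the simplicial set $K$.

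Once this identification is in place, the first equivalence of the lemma follows from the classical fact that, for any simplicial set $K$, the canonical ``last vertex'' map
\begin{equation*}
\N_{\bullet}\bigl(\Delta\!\downarrow\! K\bigr)\;\longrightarrow\; K
\end{equation*}
is a weak equivalence of simplicial sets (a result going back to Illusie, and closely related to Quillen's Theorem A applied to the fibres $\Delta\!\downarrow\![n]$, which are contractible). Combining this with the Yoneda identification $K\simeq F(c)$ yields the chain $\N_{\bullet}(F_{\downarrow c})\sim \map_{\sC}(i(c),F)\simeq F(c)$ asserted.

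The main thing to check carefully is the bookkeeping in the second step: one must verify that the compatibility condition stated in the definition of $F_{\downarrow c}$ matches, on the nose, the compatibility defining morphisms in the category of simplices of $K$ (and in particular that the direction of the arrow $\sigma\colon\Delta^{n}\to\Delta^{m}$ corresponds to a morphism of simplices in the expected sense). Once that is done, the weak equivalence $\N_{\bullet}(\Delta\!\downarrow\! K)\sim K$ is the only non-formal ingredient, and it is standard; the fibrancy of $F$ plays no role in this particular identification, though it is what makes $\map_{\sC}(i(c),F)$ model the derived mapping space.
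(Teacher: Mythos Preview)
Your proposal is correct and follows essentially the same route as the paper: identify $F_{\downarrow c}$ with a category of simplices and then invoke the classical equivalence $\N_{\bullet}(\Delta\!\downarrow\!K)\sim K$. The only cosmetic difference is that the paper applies Yoneda first, identifying $F_{\downarrow c}$ directly with the category of simplices $F(c)_{\downarrow\Delta}$ of the simplicial set $F(c)$ (Definition~\ref{Delta}), whereas you identify it with the category of simplices of $\map_{\sC}(i(c),F)$ and apply Yoneda afterwards; the underlying argument is the same.
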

 \begin{proof}
 The category $F_{\downarrow c}$ is equivalent to the category $F(c)_{\downarrow \Delta}$ (for the definition of the last category (cf \ref{Delta} ), and $\N_{\bullet} (F(c)_{\downarrow \Delta})\sim F(c)$. 
 \end{proof}

\section{Moduli space of extensions}
The main idea is the classification if the 
fiber sequences in the category $\sC$. The standard example of such classification was done in the category of  spaces (topological spaces, simplicial stes). Roughly speaking, given two spaces $X$ and $F$, we want to classify all the extension of the form $E\rightarrow X$ such that the homotopy fiber is equivalent to $F$. We say that two such extensions $E$ and $E^{'}$ are equivalent if there is a homotopy (zig-zag) equivalence between them, compatible with the base $X$. A famous result claims that 
the set of extensions of $X$ by $F$  up to equivalence is in bijection with the set of homotopy classes 
$[X,\mathrm{B}Aut^{h}(F)]$, where $Aut^{h}(F)$ is the topological monoid of homotopy equivalences of $F$ and  $\mathrm{B}Aut^{h}(F)$ is the corresponding classifying space. A more general version of this result is explained in \ref{ap2}.\\ 
In the equivariant setting, the classification of \textit{G-principle bundles} is well understood. Up to isomorphism, the \textit{G-principle bundles} over a space $X$ are classified by the set of homotopy classes $[X,\mathrm{B}G]$, where $\mathrm{B}G$ is the classifying space of $G$. Now, if $G$ is a \textit{simplicial monoid}, we formulate a generalization in our theorem \ref{ap1}. \\
Our main goal is to put the both precedent examples in the same framework and consider the classification in the category of simplicial presheaves $\sC$. 
\begin{definition}\label{trivial}
An object $X$ of $\sC$ is trivial if $X:\C^{op}\rightarrow \sSet$ factors as $X:\C^{op}\rightarrow \ast\rightarrow \sSet$, i.e., $X$ is a constant functor. 
\end{definition}
\begin{definition}\label{Ext}
Let $X$ a trivial object of $\sC$ and $F$ be any objects of $\sC$, define a new category $\mathbf{Ext}_{\C}(F,X)$, where objets are maps $E\rightarrow X$ in $\sC$, such that  the homotopy pullback $\holim(\ast\rightarrow X\leftarrow E )$ is equivalent to $F$. Morphisms are commutative diagrams:
$$\xymatrix{
E\ar[rr]^-{f}_{\sim}\ar[dr] & & E^{'}\ar[dl]\\
& X &
}$$
where $f:E\rightarrow E^{'}$ is a weak equivalence in $\sC$.
\end{definition}

\begin{definition}\label{M-bundle}(M-principle bundle)
Let $X$ be a space and $\M$ a simplicial category with one object $\ast$ i.e., it is equivalent to give
a simplicial monoid $M=\M(\ast,\ast)$ of endomorphisms. We call $M$-principle bundle any object
$f:E\rightarrow X$ of  
$\mathbf{Ext}_{\M}(M,X)$ such that $f$ is a fibration of spaces.
\end{definition}

\begin{definition}\label{Delta}
Suppose that $X$ is a trivial object in $\sC$, we denote by $\catX$ the category  where
\begin{itemize}
\item Object are maps $\Delta^{n}\rightarrow X$ in $\sC$. 
\item Morphisms are commutative diagrams in $\sC$ 
$$\xymatrix{
\Delta^{n}\ar[rr]^-{f}\ar[dr] & & \Delta^{m}\ar[dl]\\
& X &
}$$
\end{itemize}
\end{definition}

\subsection{Global correspondence } In this section we define a correspondence between the category of extension $\mathbf{Ext}_{\C}(F,X)$ and the category of functors $[\catX, \sC]$. Suppose $E\rightarrow X$ an object in $\mathbf{Ext}_{\C}(F,X)$, then for any object in $X\downarrow_{ \Delta}$, we associate the limit
$$\lim ( \Delta^{n} \rightarrow X\leftarrow E)$$ 
in the category $\sC$. This functor is denoted by 
$$\mathrm{Loc}: \mathbf{Ext}_{\C}(F,X)\rightarrow [\catX, \sC].$$
Actually, there is a natural transformation between the functor $\mathrm{Loc}$ and the trivial functor $U_{X}$ which associate to each object $\Delta^{n}\rightarrow X$ in $\catX$ the object $\Delta^{n}\in \sC$. The direct consequence is that the functor $\mathrm{Loc}$ is factored as 
$$\mathrm{Loc}: \mathbf{Ext}_{\C}(F,X)\rightarrow [\catX, \sC]_{\downarrow_{U_{X}}}\rightarrow [\catX, \sC].$$
For any object $f: E\rightarrow X$, there is a functorial factorization $f:E\rightarrow \mathrm{R}E\rightarrow X$ where the first map is a trivial cofibration and the second map is a fibration. Now, we can define an endofunctor  
$\mathrm{R}:\mathbf{Ext}_{\C}(F,X)\rightarrow \mathbf{Ext}_{\C}(F,X)$ which associate to $E\rightarrow X$ the object $\mathrm{R}E\rightarrow X$.

\begin{definition}\label{weak}
Let $\M$ a model category and $X\in \M$ any object, define $\M_{X}^{we}$ to be the subcategory of $\M$ whose objects are equivalent to $X$ and morphisms are weak equivalences.
\end{definition}

\begin{corollary}
The functor $$\mathrm{RLoc}=\mathrm{Loc}\circ\mathrm{R}:  \mathbf{Ext}_{\C}(F,X)\rightarrow
 [\catX, \sC]_{\downarrow_{U_{X}}}$$
factors as $$\mathrm{RLoc}:  \mathbf{Ext}_{\C}(F,X)\rightarrow
 [\catX,\sC^{we}_{F} ]_{\downarrow_{U_{X}}}\rightarrow [\catX,\sC]_{\downarrow_{U_{X}}}.$$ 
\end{corollary}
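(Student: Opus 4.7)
The plan is to verify that, for each $E\in \mathbf{Ext}_{\C}(F,X)$, the functor $\mathrm{RLoc}(E)\colon \catX\to\sC$ sends every object to something weakly equivalent to $F$ and every morphism to a weak equivalence in $\sC$; these are precisely the two conditions required to land inside $[\catX,\sC^{we}_{F}]_{\downarrow_{U_{X}}}$. Both reductions go through the fact that $\mathrm{R}E\to X$ is by construction a fibration, combined with the objectwise character of fibrations and weak equivalences in the projective model structure (Theorem \ref{projective}).

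For the first condition, fix an object $g\colon \Delta^{n}\to X$ of $\catX$. Since $\mathrm{R}E\to X$ is a fibration in $\sC$, it is objectwise a fibration in $\sSet$, so the strict pullback $\lim(\Delta^{n}\to X\leftarrow \mathrm{R}E)$ agrees with the homotopy pullback and is computed objectwise. Writing $X_{0}$ for the constant value of the trivial presheaf $X$, evaluation at $c\in\Ob\C$ yields $\Delta^{n}\times_{X_{0}}\mathrm{R}E(c)$ in $\sSet$; contractibility of $\Delta^{n}$ identifies this with the homotopy fibre of $\mathrm{R}E(c)\to X_{0}$ over the vertex selected by $g$. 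By the very definition of $\mathbf{Ext}_{\C}(F,X)$, the homotopy pullback $\holim(\ast\to X\leftarrow E)$ is equivalent to $F$, so this fibre is equivalent to $F(c)$, giving an objectwise equivalence $\mathrm{RLoc}(E)(g)\simeq F$ in $\sC$.

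For the second condition, a morphism $\sigma\colon g_{1}\to g_{2}$ in $\catX$ is a simplicial map $\Delta^{n}\to\Delta^{m}$ commuting with the structure maps to $X$. Both $\Delta^{n}$ and $\Delta^{m}$ are contractible, so $\sigma$ is a weak equivalence. Right properness of $\sSet$, applied objectwise to the fibration $\mathrm{R}E\to X$, then shows that the induced comparison $\lim(\Delta^{n}\to X\leftarrow \mathrm{R}E)\to\lim(\Delta^{m}\to X\leftarrow \mathrm{R}E)$ is itself a weak equivalence in $\sC$, providing the required morphism datum in $\sC^{we}_{F}$.

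The step I expect to require the most care is the basepoint bookkeeping in the first paragraph: the pullback of a fibration along a map from a contractible space is equivalent only to \emph{some} homotopy fibre, and identifying that fibre with $F$ requires that all components of $X$ give rise to the same fibre up to equivalence --- implicitly a connectedness assumption on $X$, or else the convention that $\mathbf{Ext}_{\C}(F,X)$ is read component-wise. Once this is pinned down, the compatibility with the forgetful functor to $U_{X}$ is tautological, since $\Delta^{n}\times_{X}\mathrm{R}E$ comes equipped with its canonical projection to $\Delta^{n}$, and functoriality in the $\catX$-variable follows from the universal property of pullbacks.
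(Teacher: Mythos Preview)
Your proof is correct and follows the same line as the paper's: both use that $\mathrm{R}E\to X$ is a fibration together with right properness to identify the strict pullback $\lim(\Delta^{n}\to X\leftarrow \mathrm{R}E)$ with a homotopy pullback, hence with $F$. You are in fact more thorough than the paper, which only treats the object condition and leaves the verification that morphisms of $\catX$ go to weak equivalences implicit; your second paragraph fills that gap cleanly via right properness and contractibility of the simplices. The basepoint issue you flag is real and is glossed over in the paper as well --- the definition of $\mathbf{Ext}_{\C}(F,X)$ should be read as requiring the homotopy fibre over \emph{every} point to be equivalent to $F$ (or $X$ to be connected), and with that reading your argument goes through without further comment.
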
 
\begin{proof}
Suppose that $E\rightarrow X$ is an object If  $\mathbf{Ext}_{\C}(F,X)$, then the pullback 
 $\lim(\mathrm{R}E\rightarrow X\leftarrow \Delta^{n})$ is a homotopy pullback since $\sC$ is right proper, thus the fiber is equivalent to $F$ by definition. It follows that $\mathrm{RLoc}$ factors through  $[\catX,\sC^{we}_{F} ]_{\downarrow_{U_{X}}}$
\end{proof}



\subsection{Local correspondence} 
 The inverse map is called the assembly map in  \cite{blomgren}. The idea is quite simple, pick any object in $K\in [\catX,\sC^{we}_{F}]_{\downarrow_{U_{X}}}$, by definition the object $K$ comes with a natural transformation $K\rightarrow U_{X}$, if we denote by $QK\rightarrow K$ the functorial cofibrant replacement of $K$ in the projective model structure  $[\catX,\sC]$, then after taking the colimit we end up with an object in $\mathbf{Ext}_{\C}(F,X)$ which is given by 
 $$ \hocolim_{\catX}~K=\colim~ QK\longrightarrow \colim~K\longrightarrow\colim~ U_{X}= X.$$

\begin{lemma}\label{fubini}
With the same notation as before, the map (which exists by lifting property)
\begin{equation}\label{fubinieq}
\hocolim_{\catX}~[K(c)]\rightarrow[\hocolim_{\catX}~K](c)
\end{equation}
is a weak equivalence for any $c\in\C$. 
\end{lemma}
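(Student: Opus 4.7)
The plan is to relate the projective cofibrant replacements in $[\catX,\sC]$ with those in $[\catX,\sSet]$ through the evaluation-at-$c$ functor, and then reduce the lemma to the observation that evaluation commutes with ordinary colimits.

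First I would recall that in the projective model structure on $\sC$ (Theorem~\ref{projective}), weak equivalences, fibrations, and all colimits are computed pointwise. It follows that the evaluation functor $\mathrm{ev}_{c}:\sC\to\sSet$, $F\mapsto F(c)$, admits both a left and a right adjoint, and in particular preserves all colimits; the same statement, with the same proof, applies to the induced functor $\mathrm{ev}_{c}:[\catX,\sC]\to[\catX,\sSet]$.

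The key step is to verify that $\mathrm{ev}_{c}$ is left Quillen for the projective model structures. Preservation of weak equivalences is immediate from the pointwise definition. To check preservation of cofibrations, I would analyse the generating cofibrations of $[\catX,\sC]$, which have the form $\catX(\alpha,-)\cdot\bigl(\C(-,c')\times(\partial\Delta^{n}\hookrightarrow\Delta^{n})\bigr)$. Applying $\mathrm{ev}_{c}$ produces $\catX(\alpha,-)\cdot\bigl(\C(c,c')\times(\partial\Delta^{n}\hookrightarrow\Delta^{n})\bigr)$, which is a cofibration in $[\catX,\sSet]$: it decomposes as a pushout, indexed by simplices of $\C(c,c')$, of generating projective cofibrations. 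I expect this verification to be the main technical hurdle, but it is essentially forced by the pushout--product behaviour of $\sSet$.

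Given this, choose a projective cofibrant replacement $QK\to K$ in $[\catX,\sC]$. By what was just established, $\mathrm{ev}_{c}(QK)\to K(-)(c)$ is a projective cofibrant replacement in $[\catX,\sSet]$. Then, using that $\mathrm{ev}_{c}$ commutes with all colimits, one obtains
\begin{equation*}
\hocolim_{\catX}K(-)(c)\;\simeq\;\colim_{\catX}\mathrm{ev}_{c}(QK)\;=\;\mathrm{ev}_{c}\bigl(\colim_{\catX}QK\bigr)\;=\;[\hocolim_{\catX}K](c).
\end{equation*}
A naturality/diagram chase then identifies this zig-zag of equivalences with the comparison map~\eqref{fubinieq} built by the lifting property, completing the proof.
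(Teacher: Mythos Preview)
Your argument is correct and essentially identical to the paper's: the paper also checks that evaluation at $c$ sends the generating (trivial) cofibrations $\catX(i,-)\times\C(-,x)\times X\to\catX(i,-)\times\C(-,x)\times Y$ of $[\catX,\sC]$ to cofibrations in $[\catX,\sSet]$, concludes that $\mathrm{ev}_c$ is left Quillen so that $QK(c)\to K(c)$ is a cofibrant replacement, and then invokes the universal property of the homotopy colimit. Your version is slightly more explicit about why the evaluated map is a projective cofibration and about the final identification, but the structure of the proof is the same.
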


\begin{proof}
Just to be more precise, the first homotopy colimit is computed in the category of diagrams in $\sSet$ after evaluation in $c$, and the second one is computed in the category of diagrams in $\sC$ and then evaluated at $c\in \C$. In fact, if $QK$ is the cofibrant replacement of 
 $K$ in the projective model category $[\catX,\sC]$, then $QK(c)$ is the cofibrant replacement of $K(c)$
 in the projective model category $[\catX,\sSet]$, since both categories are cofibrantly generated we can check by hand. The generating (trivial) cofibrations in $[\catX,\sC]$ are given by
 $$ \catX( i, )\times\C(-,x)\times X\rightarrow  \catX( i, )\times\C(-,x)\times Y$$ 
 where $X\rightarrow Y$ is a (trivial) cofibration in $\sSet$,  $i\in \catX$ and $x\in\C$. Thus, the evaluation functor $ev_{c}: [\catX,\sC]\rightarrow [\catX,\sSet]$ at any object $c\in\C$ is a left Quillen functor, which means that $QK(c)$ is a cofibrant object in $[\catX,\sSet]$. We conclude that $QK(c)\rightarrow K(c)$ is a cofibrant replacement of $K(c)$ in  $[\catX,\sSet]$ and that the map in \ref{fubinieq} is a weak equivalence for any $c\in\C$ by the universal property of the the homotopy colimit. 
 \end{proof}

\begin{lemma}\label{Global}
We the same notation as before, suppose that $X\in\sC$ is trivial, then the homotopy pullback in $\sC$ of the diagram: 
$$ \holim\big(\hocolim_{\catX}~K\rightarrow X\leftarrow\ast\big)$$
is equivalent to $F$. 
\end{lemma}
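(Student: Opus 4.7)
The plan is to reduce the statement pointwise in $\C$ and then to identify the homotopy fiber of a homotopy colimit with a single value of the diagram $K$. Since $X$ is trivial, a map $\ast\to X$ in $\sC$ is the data of a basepoint $x_{0}$ of the underlying simplicial set of $X$, and homotopy pullbacks in $\sC$ are computed pointwise because $\sC$ is right proper. Thus for each $c\in\C$ it suffices to establish an equivalence
\[
\holim\bigl((\hocolim_{\catX}K)(c)\to X\leftarrow \ast\bigr)\simeq F(c).
\]
By Lemma \ref{fubini}, the evaluation $(\hocolim_{\catX}K)(c)$ is equivalent to $\hocolim_{\catX}K(c)$, so the problem is reduced to an argument entirely in $\sSet$.

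Next, I would compare $K(c)$ with $U_{X}(c)$ along the structural transformation $K\to U_{X}$ encoded in the factorization defining $[\catX,\sC]_{\downarrow U_{X}}$. By definition $U_{X}(c)$ sends $(s:\Delta^{n}\to X)$ to $\Delta^{n}$, each of which is contractible, so $U_{X}(c)$ is pointwise weakly equivalent to the constant diagram with value $\ast$. Consequently
\[
\hocolim_{\catX}U_{X}(c)\;\simeq\;\hocolim_{\catX}\ast\;\simeq\;\N_{\bullet}\catX\;\simeq\;X,
\]
where the final equivalence is the standard identification of the nerve of the simplex category of a simplicial set with the simplicial set itself, in the same spirit as the computation used in Lemma \ref{mapping}. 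The induced map $\hocolim_{\catX}K(c)\to X$ factors through this identification.

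Finally, I would identify the homotopy fiber at $x_{0}$: after choosing a vertex $s_{0}=(x_{0}:\Delta^{0}\to X)$ representing the basepoint, the map $\hocolim_{\catX}K(c)\to\N_{\bullet}\catX$ is the classifying projection attached to the diagram $K(c)$, and its homotopy fiber at $s_{0}$ is $K(s_{0})(c)$; since $K$ takes values in $\sC_{F}^{we}$, this value is weakly equivalent to $F(c)$. The main obstacle is precisely this last identification: it is an instance of the "homotopy fiber over a nerve equals pointwise value" principle, which in our setting can be extracted from the Grothendieck/unstraightening description of $\hocolim_{\catX}K(c)$ together with the fact that every morphism in the diagram $K(c)$ is a weak equivalence (inherited from the definition of $\sC_{F}^{we}$). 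Assembling these three steps yields the desired equivalence pointwise for every $c\in\C$, and hence the lemma.
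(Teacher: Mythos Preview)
Your proposal is correct and follows essentially the same route as the paper: reduce pointwise in $c\in\C$ using Lemma~\ref{fubini} and the triviality of $X$, and then invoke the simplicial-set statement that the homotopy fiber of $\hocolim_{\catX}K(c)\to X$ over a vertex is $K(s_{0})(c)\simeq F(c)$. The only difference is that the paper outsources this last step to \cite{blomgren}, whereas you sketch it directly via the Grothendieck/unstraightening description together with the fact that $K(c)$ sends every morphism of $\catX$ to a weak equivalence; your sketch is exactly the content of the cited result.
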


\begin{proof}
We have seen in \ref{fubini} that $\hocolim_{\catX}~[K(c)]\rightarrow[\hocolim_{\catX}~K](c)$,
 is a weak equivalence for any $c\in\C$, on the other hand $X(c)=X$ for any $c\in\C$, we deduce that 
 $$\holim \big([\hocolim_{\catX}~K](c)\rightarrow X(c)\leftarrow\ast\big)\sim\holim \big(\hocolim_{\catX}~[K(c)]\rightarrow X(c)\leftarrow\ast\big)$$
 and by \cite{blomgren} we have
  $$\holim \big(\hocolim_{\catX}~K(c)\rightarrow X(c)\leftarrow\ast\big)\sim F(c)$$
  since the (trivial) fibrations in $\sC$ are degree wise (trivial) fibrations and limits are computed also degrewise in $\sC$, we conclude that 
  $$\holim\big(\hocolim_{\catX}~K\rightarrow X\leftarrow\ast\big)\sim F.$$
 
\end{proof}
\begin{corollary} By the precedent lemma \ref{Global}, we have that $\hocolim_{\catX}K\rightarrow X$ is an object of $\mathbf{Ext}_{\C}(F,X)$, thus we define the derived global functor 
$$\mathrm{LGlob}: [\catX,\sC^{we}_{F}]_{\downarrow_{U_{X}}}\rightarrow  \mathbf{Ext}_{\C}(F,X)$$
 as the composition 
$$ \xymatrix{
[\catX,\sC^{we}_{F}]_{\downarrow_{U_{X}}}\ar[r]^-{Q} & [\catX,\sC^{we}_{F}]_{\downarrow_{U_{X}}}\ar[r]^{\colim} &  \mathbf{Ext}_{\C}(F,X). 
}$$
\end{corollary}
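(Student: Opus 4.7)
The corollary is essentially a packaging statement: it asserts that the assignment $K \mapsto \colim(QK \to X)$ is a well-defined functor landing in $\mathbf{Ext}_{\C}(F,X)$. My plan is to verify the two things that are not immediate, namely (a) that the output really is an extension with fiber $F$, and (b) that the construction is functorial and well-behaved on morphisms.

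First I would unpack the target. Given $K \in [\catX,\sC^{we}_{F}]_{\downarrow_{U_{X}}}$, the structure map $K \to U_{X}$ applied after cofibrant replacement yields $QK \to QU_{X}$, and by functoriality of $\colim$ we obtain $\colim QK \to \colim QU_{X} \to \colim U_{X}$. Since $\catX$ is the category of simplices of $X$ and $U_{X}$ sends $(\Delta^{n}\to X)$ to $\Delta^{n}$ (a constant presheaf), $\colim U_{X}$ is canonically identified with $X$ itself by the classical presentation of a simplicial set as the colimit of its simplices. The displayed formula $\hocolim_{\catX} K = \colim QK$ is then the definition of the homotopy colimit via a projective cofibrant replacement, which is the model used throughout the previous two lemmas.

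Next I would invoke Lemma \ref{Global} directly: it tells us that the homotopy pullback $\holim(\hocolim_{\catX} K \to X \leftarrow \ast)$ is equivalent to $F$. Since this is precisely the defining property of an object of $\mathbf{Ext}_{\C}(F,X)$ in Definition \ref{Ext}, the map $\colim QK \to X$ lies in $\mathbf{Ext}_{\C}(F,X)$.

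Finally, for functoriality, I would use that the cofibrant replacement $Q$ is functorial and preserves trivial cofibrations, so the whole composite $\colim \circ Q$ is a functor. The remaining point to check is that a morphism in $[\catX,\sC^{we}_{F}]_{\downarrow_{U_{X}}}$, being a natural transformation with componentwise weak equivalences compatible with the maps to $U_{X}$, is carried to a morphism in $\mathbf{Ext}_{\C}(F,X)$, i.e., a weak equivalence over $X$. This follows because a levelwise weak equivalence between projectively cofibrant diagrams induces a weak equivalence on colimits, and the colimit maps commute with the structure maps to $\colim U_{X}=X$ by naturality. The only potential obstacle is the identification $\colim U_{X}\simeq X$ and the check that a levelwise weak equivalence between $Q$-replacements of diagrams in $\sC^{we}_{F}$ is again sent to a weak equivalence on colimits; both are standard facts about the projective model structure on $[\catX,\sC]$ already exploited in Lemma \ref{fubini}, so no new argument is required.
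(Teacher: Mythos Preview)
Your proposal is correct and follows exactly the approach the paper intends: the paper gives no proof for this corollary at all, treating it as an immediate consequence of Lemma~\ref{Global} together with the standard functoriality of $Q$ and $\colim$. Your verification of well-definedness via Lemma~\ref{Global}, of the identification $\colim U_{X}=X$, and of functoriality on morphisms simply spells out what the paper leaves implicit.
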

\subsection{Relation between the local-global correspondances }
In this section we prove our main theorem. Recall that the nerve functor $\N_{\bullet}: \Cat\rightarrow \sSet$ has the property that 
\begin{equation}\label{equation11}
\N_{\bullet}\Cat(\C,\D)\simeq\Map(\N_{\bullet}\C,\N_{\bullet}\D).
\end{equation}

\begin{lemma}\label{adjoint}
We have a weak equivalence  
$$\N_{\bullet}\mathrm{Forget}~: \N_{\bullet}[\catX,\sC^{we}_{F}]_{\downarrow_{U_{X}}}\rightarrow \N_{\bullet}[\catX,\sC^{we}_{F}]$$
 \end{lemma}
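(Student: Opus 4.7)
The plan is to exhibit a right adjoint to the forgetful functor, restricted to the subcategories of weak equivalences, whose unit and counit are themselves natural weak equivalences. Since a natural transformation of functors $\mathcal{C} \to \mathcal{D}$ induces a simplicial homotopy between the induced maps on nerves, this will force $\N_{\bullet}\mathrm{Forget}$ to be a homotopy equivalence of simplicial sets.

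Concretely, I would define $R: [\catX, \sC^{we}_F] \to [\catX, \sC^{we}_F]_{\downarrow U_X}$ by $R(L) = (L \times U_X,\ \mathrm{pr}_2)$, where the product is formed pointwise in $\sC$ and the structure map is projection onto the second factor. The crucial observation is that each $U_X(f) = \Delta^n$ is contractible, so the projection $L(f) \times \Delta^n \to L(f)$ is a weak equivalence in $\sC$ (products with contractible simplicial sets preserve weak equivalences in $\sSet$, and weak equivalences in $\sC$ are detected pointwise). Consequently $R(L)$ still lies in $[\catX, \sC^{we}_F]_{\downarrow U_X}$, and $R$ sends componentwise weak equivalences to componentwise weak equivalences. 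The adjunction bijection $\hom((K,\alpha), R(L)) \cong \hom(K, L)$ is immediate from the universal property of products: a compatible map $K \to L \times U_X$ is determined by its projection $K \to L$, since the other component is forced to equal $\alpha$.

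I then verify that the unit $\eta_{(K,\alpha)} = (\mathrm{id}_K, \alpha): (K,\alpha) \to (K \times U_X, \mathrm{pr}_2)$ and the counit $\epsilon_L = \mathrm{pr}_1: L \times U_X \to L$ are componentwise weak equivalences in $\sC$ — the unit by two-out-of-three after composing with the projection, the counit directly. Thus both natural transformations live in the correct subcategories, and applying the nerve functor yields simplicial homotopies between $\N_{\bullet}(R \circ \mathrm{Forget})$ and the identity, and between $\N_{\bullet}(\mathrm{Forget} \circ R)$ and the identity, which is the desired homotopy equivalence.

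The main subtlety is to confirm that the adjunction restricts cleanly to the subcategories consisting only of weak equivalences — that $R$ preserves weak equivalences and that the unit and counit land in these subcategories. All of this reduces to the single fact that $- \times \Delta^n$ preserves weak equivalences in $\sSet$ (every simplicial set being cofibrant), together with the pointwise detection of weak equivalences in $\sC$.
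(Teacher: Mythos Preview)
Your proof is correct and follows essentially the same approach as the paper: exhibit $-\times U_X$ as an adjoint to the forgetful functor, and conclude that the nerves are homotopy equivalent. You are in fact more careful than the paper on two points. First, you verify that the unit and counit are componentwise weak equivalences, which is needed because the morphisms in $[\catX,\sC^{we}_{F}]$ and its overcategory are by definition only the weak equivalences; the paper asserts the adjunction without making this check explicit. Second, you correctly identify $-\times U_X$ as a \emph{right} adjoint to $\mathrm{Forget}$ (via the universal property of the product, $\hom_{/U_X}((K,\alpha),(L\times U_X,\mathrm{pr}_2))\cong\hom(K,L)$), whereas the paper calls it a left adjoint, which is a slip. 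Since any adjunction induces a homotopy equivalence on nerves, this does not affect the conclusion, but your version is the accurate one.
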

\begin{proof}
The forgetful functor admits a left adjoint 
$$ - \times U_{X}:~[\catX,\sC^{we}_{F}]\rightarrow [\catX,\sC^{we}_{F}]_{\downarrow_{U_{X}}}$$
such that for any $K\in [\catX,\sC^{we}_{F}]$ the map $K\times U_{X}\rightarrow U_{X}$ is the canonical projection on the second factor. Thus, we obtain the desired equivalence of the corresponding nerves. 
\end{proof}

\begin{theorem}\label{main}
The simplicial map $$\N_{\bullet}\mathrm{RLoc}:  \N_{\bullet}\mathbf{Ext}_{\C}(F,X)\rightarrow
 \N_{\bullet} [\catX, \sC^{we}_{F}]_{\downarrow_{U_{X}}} $$ 
is a weak homotopy equivalence. Moreover, the induced map 
$$\N_{\bullet}\mathrm{LGlob}:  \N_{\bullet} [\catX, \sC^{we}_{F}]_{\downarrow_{U_{X}}} \rightarrow \N_{\bullet}\mathbf{Ext}_{\C}(F,X)$$
is a weak homotopy inverse. 
\end{theorem}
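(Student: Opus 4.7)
The plan is to verify that $\mathrm{RLoc}$ and $\mathrm{LGlob}$ are mutually inverse up to a natural zig-zag of weak equivalences of functors, and then invoke the standard principle that a natural weak equivalence between functors of small categories induces a simplicial homotopy between the induced maps on nerves (through the cylinder $\catX\times [1]$). Together with the fact $\N_{\bullet}\mathrm{Cat}(\C,\D)\simeq\Map(\N_{\bullet}\C,\N_{\bullet}\D)$ recalled in \ref{equation11}, and Lemma \ref{adjoint} which erases the slice decoration at the level of nerves when needed, this will yield the two homotopy equivalences of the statement.

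First I would analyze the composition $\mathrm{LGlob}\circ\mathrm{RLoc}$. For an extension $f: E\to X$, the functor $\mathrm{RLoc}(E)$ sends $\sigma:\Delta^n\to X$ to the pullback $\Delta^n\times_X \mathrm{R}E$ together with its projection to $\Delta^n=U_X(\sigma)$. Applying $\mathrm{LGlob}$ takes the homotopy colimit over $\catX$. The crucial input is that the trivial functor $U_X$ itself satisfies $\hocolim_{\catX} U_X\simeq X$, which is simply the statement that every simplicial set is the homotopy colimit of its simplex diagram. Combining this with the right-properness of $\sC$ (which lets us commute $\hocolim$ past pullback along the fibration $\mathrm{R}E\to X$), we get a natural zig-zag $\hocolim_{\catX}(\Delta^n\times_X \mathrm{R}E)\simeq(\hocolim_{\catX}\Delta^n)\times_X \mathrm{R}E \simeq X\times_X \mathrm{R}E\simeq \mathrm{R}E\simeq E$, compatibly over $X$.

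Second, the composition $\mathrm{RLoc}\circ\mathrm{LGlob}$: given $K\in[\catX,\sC^{we}_F]_{\downarrow_{U_X}}$ we form $\hocolim_{\catX}K\to X$, replace by a fibration, and pull back over each $\sigma:\Delta^n\to X$. By Lemma \ref{fubini} the homotopy colimit is computed objectwise in $\sC$, and a Bousfield--Kan cofinality argument applied to the slice of $\catX$ over $\sigma$ recovers the value $K(\sigma)$ up to weak equivalence; the natural projection $K(\sigma)\to\Delta^n$ lifts to a natural weak equivalence $K\Rightarrow \mathrm{RLoc}\circ\mathrm{LGlob}(K)$ in $[\catX,\sC^{we}_F]_{\downarrow_{U_X}}$.

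The main obstacle will be the first step, namely the commutation $\hocolim_{\catX}(\Delta^n\times_X \mathrm{R}E)\simeq(\hocolim_{\catX}\Delta^n)\times_X \mathrm{R}E$ in the projective model structure on $\sC$, since this combines two separate facts: the Bousfield--Kan presentation of $X$ as the homotopy colimit of its simplex diagram $U_X$, and the compatibility of this homotopy colimit with pullback along a fibration, the latter requiring right-properness and the objectwise nature of pullbacks recorded in the proof of Lemma \ref{Global}. Once these two compatibilities are firmly established, the rest of the argument is a formal verification that the relevant natural transformations are weak equivalences in each degree, which descends to mutually inverse simplicial homotopy equivalences on nerves.
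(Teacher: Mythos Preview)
Your overall strategy—exhibiting zig-zags of natural weak equivalences connecting both composites to the identity and then passing to nerves—matches the paper's. The execution, however, differs. The paper does not attempt the commutation of $\hocolim$ with pullback directly in $\sC$. Instead it uses Lemma~\ref{fubini} to reduce everything to the objectwise level: since $(QK)(c)\to K(c)$ is a functorial cofibrant replacement for each $c\in\C$, one evaluates at $c$ and then invokes Proposition~18.5 of \cite{blomgren}, which already supplies the zig-zags
\[
[\mathrm{RLoc}\circ\mathrm{LGlob}\,(K)](c)\ \longleftarrow\ (QK)(c)\ \longrightarrow\ K(c),
\qquad
\mathrm{LGlob}\circ\mathrm{RLoc}\,(f_c)\ \longrightarrow\ Rf_c\ \longleftarrow\ f_c
\]
in the simplicial-set case. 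Functoriality in $c$ reassembles these into zig-zags in $\sC$. So the paper's proof is a reduction step plus a citation; your proposal is in effect a self-contained reproof of the cited result, carried out inside $\sC$.

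That direct route is viable, but there is a misattribution at the point you yourself flag as the main obstacle. Right-properness of $\sC$ guarantees that the strict pullbacks $\Delta^n\times_X\mathrm{R}E$ are homotopy pullbacks (this is how one sees $\mathrm{RLoc}$ lands in $\sC^{we}_F$), but it does \emph{not} give
\[
\hocolim_{\catX}(\Delta^n\times_X \mathrm{R}E)\ \simeq\ \bigl(\hocolim_{\catX}\Delta^n\bigr)\times_X \mathrm{R}E.
\]
Right-properness makes pullback along a fibration a right Quillen functor, hence compatible with homotopy \emph{limits}, not homotopy colimits. What you actually need is universality of (homotopy) colimits: in the presheaf topos $\sC$ strict colimits commute with pullback, and then one must argue that for the specific diagram $\sigma\mapsto\sigma^*\mathrm{R}E$ the comparison $\hocolim\to\colim$ is an equivalence (e.g.\ by recognizing it as the simplex diagram of $\mathrm{R}E$, or via a Reedy argument over $\catX$). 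The same ingredient is what makes your cofinality sketch for $\mathrm{RLoc}\circ\mathrm{LGlob}$ go through. Note also that Lemma~\ref{Global}, which you cite, itself defers this step to \cite{blomgren}. Finally, Lemma~\ref{adjoint} and \eqref{equation11} play no role in the theorem as stated; they enter only in Corollary~\ref{main2}.
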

\begin{proof}
For any $c\in \C$, and any $K\in[\catX,\sC^{we}_{F}]_{\downarrow_{U_{X}}}\subset [\catX,\sC]_{\downarrow_{U_{X}}}$, we have seen by \ref{fubini} that  $\hocolim_{\catX}~[K(c)]\rightarrow[\hocolim_{\catX}~K](c)$ is a weak equivalence, which means that $(QK)(c)\rightarrow K(c)$ is a cofibrant replacement for any $c\in \C$ and \textit{functorial} in the variable $c$. Now, we are ready to apply the proposition 18.5 of \cite{blomgren} degreewise, thus we construct a zig-zag of natural transformations 
between the identity and $\mathrm{RLoc}\circ\mathrm{LGlob} $ which is described as follows, after evaluation at $c$
$$\xymatrix{
[\mathrm{RLoc}\circ\mathrm{LGlob} ~ (K)](c) & (QK)(c)\ar[l]\ar[r] & K(c).
}
$$
For any object $f: E\rightarrow X$ in $\mathbf{Ext}_{\C}(F,X)$, we take the evaluation at $c$ i.e., $f_{c}: E(c)\rightarrow X(c)=X$. Applying the same proposition 18.5 of \cite{blomgren}, we have a  zig-zag of natural transformations between $id$ and $\mathrm{LGlob}\circ\mathrm{RLoc} $ given by 
$$\xymatrix{
[\mathrm{LGlob}\circ\mathrm{RLoc} ~ (f_{c})] \ar[r] & Rf_{c} & f_{c}\ar[l] 
}
$$
Consequently, the induced maps 
$$\mathrm{RLoc}: ~\N_{\bullet}\mathbf{Ext}_{\C}(F,X)\rightarrow  \N_{\bullet}[\catX,\sC^{we}_{F}]_{\downarrow_{U_{X}}}$$
$$\mathrm{LGlob}: ~\N_{\bullet}[\catX,\sC^{we}_{F}]_{\downarrow_{U_{X}}}\rightarrow  \N_{\bullet}\mathbf{Ext}_{\C}(F,X)$$
are weak equivalence and are weak inverses of each other. 

\end{proof}
\begin{corollary}\label{main2}
There is a weak  equivalence:
$$  \N_{\bullet}\mathbf{Ext}_{\C}(F,X)\sim \Map(X,\mathrm{B}Aut^{h}(F)).$$
\end{corollary}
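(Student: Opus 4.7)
The plan is to chain together the equivalences already established in this section with Theorem~\ref{wMX} (recalled in the introduction as Theorem~III), via the elementary fact that the nerve of a functor category is a mapping space of nerves, cf.\ \eqref{equation11}. Concretely, I would read off the corollary as the composite of four equivalences:
\begin{equation*}
\N_{\bullet}\mathbf{Ext}_{\C}(F,X)
\;\xrightarrow{\sim}\;
\N_{\bullet}[\catX,\sC^{we}_{F}]_{\downarrow_{U_{X}}}
\;\xrightarrow{\sim}\;
\N_{\bullet}[\catX,\sC^{we}_{F}]
\;\xrightarrow{\sim}\;
\Map\bigl(\N_{\bullet}\catX,\N_{\bullet}\sC^{we}_{F}\bigr)
\;\xrightarrow{\sim}\;
\Map\bigl(X,\mathrm{B}Aut^{h}(F)\bigr).
\end{equation*}
The first arrow is exactly Theorem~\ref{main}, and the second is Lemma~\ref{adjoint}.

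For the third arrow, I would invoke \eqref{equation11}: applying $\N_{\bullet}$ to the internal hom of small categories produces the simplicial mapping space of the corresponding nerves. One small point to check here is that although $\sC^{we}_{F}$ is defined as a subcategory of $\sC$ carved out by a homotopy condition (Definition~\ref{weak}), it is still an honest ordinary category, so $[\catX,\sC^{we}_{F}]$ is an ordinary functor category and \eqref{equation11} applies directly.

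For the fourth arrow, I would use Theorem~\ref{wMX} to replace $\N_{\bullet}\sC^{we}_{F}$ with $\mathrm{B}Aut^{h}(F)$, together with the identification $\N_{\bullet}\catX\sim X$. The latter is a standard fact about the category of simplices: the nerve of $\catX=\X_{\downarrow\Delta}$ is the homotopy colimit of the constant diagram on $X$ indexed by itself, which is weakly equivalent to $X$; this is exactly the ingredient already used in the proof of Lemma~\ref{mapping} (applied there to $F(c)_{\downarrow\Delta}\sim F(c)$ with $X$ in the role of $F(c)$). Since $\Map$ sends weak equivalences in the source to equivalences, this produces the final $\Map(X,\mathrm{B}Aut^{h}(F))$.

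The step I expect to be the most delicate is the passage through \eqref{equation11}, because the ambient mapping space needs to be interpreted correctly as an unenriched nerve-level statement, and because one wants the naturality that the composite equivalence corresponds to the geometrically meaningful classifying map $X\to \mathrm{B}Aut^{h}(F)$ of an extension. Once Theorem~\ref{main}, Lemma~\ref{adjoint}, Theorem~\ref{wMX}, and $\N_{\bullet}\catX\sim X$ are in hand, however, no further model-category input is required: the corollary is a bookkeeping composition of these four equivalences.
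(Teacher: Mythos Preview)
Your proposal is correct and follows essentially the same route as the paper's own proof: both chain together Theorem~\ref{main}, Lemma~\ref{adjoint}, equation~\eqref{equation11}, the identification $\N_{\bullet}\catX\sim X$ from Lemma~\ref{mapping}, and Theorem~\ref{wMX}. Your additional remarks about why \eqref{equation11} applies and the extra care about $\Map$ preserving weak equivalences are reasonable elaborations but do not change the argument.
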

\begin{proof}
Since  $\N_{\bullet}\catX \sim X$ by \ref{mapping}, and $\N_{\bullet}\sC^{we}_{F}\sim \mathrm{B}Aut^{h}(F)$ by \ref{wMX}, thus, we apply theorem \ref{main}, \ref{adjoint}, \ref{equation11}, and we conclude that $$ \N_{\bullet}\mathbf{Ext}_{\C}(F,X)\sim\N_{\bullet}[\catX, \sC^{we}_{F}]_{\downarrow_{U_{X}}}\sim \N_{\bullet}[\catX, \sC^{we}_{F}]\sim \Map(X,\mathrm{B}Aut^{h}(F)).$$
\end{proof}

\subsection{Applications}
\begin{definition}\label{invers}
Let $G$ be simplicial monoid, we define $G^{\star}$ the simplicial monoid of homotopy invertible elements as the pullback:
$$
 \xymatrix{
G^{\star}\ar[r]\ar[d] & G\ar[d]\\
 [\pi_{0}G]^{\star}\ar[r] & [\pi_{0}G]
 }
 $$
 where $ [\pi_{0}G]^{\star}$ is the set of invertible elements of the discrete monoid $[\pi_{0}G]$.
 \end{definition}
 

\begin{theorem}\label{ap1}
Let $G$ be a \textit{simplicial monoid} and $\mathbf{G}$ the simplicial category with one object $\ast$ and $\mathbf{G}(\ast,\ast)$, be denote by $G$ the representable functor $\mathbf{G}(-,\ast)$, then 
$$\N_{\bullet}\mathbf{Ext}_{\mathbf{G}}(G,X)\sim\Map(X,\mathrm{B}G^{\star}),$$
if $G$ is simplicial monoid group-like then, $\N_{\bullet}\mathbf{Ext}_{\mathbf{G}}(G,X)\sim\Map(X,\mathrm{B}G).$
\end{theorem}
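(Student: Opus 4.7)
The plan is to reduce the statement directly to Corollary \ref{main2} applied to $\C=\mathbf{G}$ and $F=G=\mathbf{G}(-,\ast)$, the representable presheaf. Once this reduction is made, the only work left is to identify $\mathrm{B}Aut^{h}(G)$ with $\mathrm{B}G^{\star}$, which is essentially a consequence of the enriched Yoneda lemma.

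First I would invoke the Yoneda lemma as stated in the preliminaries: for the representable functor $G=\mathbf{G}(-,\ast)$ one has the identification
$$\map_{\sC}(G,G)=\map_{\sC}\bigl(\mathbf{G}(-,\ast),\mathbf{G}(-,\ast)\bigr)=\mathbf{G}(\ast,\ast)=G$$
as simplicial monoids (the monoid structure on the left given by composition matches the monoid structure on $\mathbf{G}(\ast,\ast)$). Passing to the submonoid of self-maps which are weak equivalences in $\sC$, one gets
$$Aut^{h}(G)=\map_{\sC}(G,G)^{\star}=G^{\star},$$
because a self-map of $G$ is a weak equivalence in $\sC$ iff it is a weak equivalence after evaluation at $\ast$, iff the corresponding element of $\pi_{0}G$ is invertible, which is exactly the pullback definition \ref{invers}. (Of course one must first replace $G$ by a fibrant-cofibrant model in $\sC$; since the projective model structure is simplicial and representables are cofibrant, only a fibrant replacement is needed, and this preserves the identification up to equivalence.) Now Corollary \ref{main2} gives
$$\N_{\bullet}\mathbf{Ext}_{\mathbf{G}}(G,X)\sim\Map\bigl(X,\mathrm{B}Aut^{h}(G)\bigr)\sim\Map(X,\mathrm{B}G^{\star}),$$
which is the first assertion.

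For the group-like case, the argument is a one-line observation: if $G$ is group-like, then $\pi_{0}G$ is already a group, so $[\pi_{0}G]^{\star}=[\pi_{0}G]$, and the defining pullback for $G^{\star}$ collapses to give $G^{\star}=G$. Hence $\mathrm{B}G^{\star}\simeq\mathrm{B}G$ and the second formula follows from the first.

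The main obstacle, I expect, is the careful verification that $Aut^{h}(G)$ (defined as the mapping space of self-equivalences of a fibrant-cofibrant replacement) really is weakly equivalent to the submonoid $G^{\star}\subset\map_{\sC}(G,G)$ of homotopy invertible elements: one has to check that the Yoneda identification is compatible with the monoid structure and with the pullback characterization of invertibility on $\pi_{0}$. Everything else is a formal appeal to Corollary \ref{main2}.
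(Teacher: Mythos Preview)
Your proposal is correct and follows essentially the same approach as the paper: apply Corollary~\ref{main2} with $\C=\mathbf{G}$ and $F=G$, then use the Yoneda identification $\map_{\widehat{\mathbf{G}}}(G,G)\simeq G$ (which the paper packages as Lemma~\ref{mapping}) to conclude $Aut^{h}(G)\sim G^{\star}$. Your write-up is more detailed about the monoid compatibility and the group-like reduction, but the underlying argument is the same.
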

\begin{proof}
We apply corollary \ref{main2} and recall that $\map_{\widehat{\mathbf{G}}}(G,G)^{\star}\sim G^{\star}$ by lemma \ref{mapping}. 
\end{proof}
\begin{corollary}\label{ap2}
In the model category of simplicial sets $\sSet=\widehat{\ast}$,  we have
$$\N_{\bullet}\mathbf{Ext}_{\widehat{\ast}}(F,X)\sim\Map(X,\mathrm{B}Aut^{h}(F)).$$
\end{corollary}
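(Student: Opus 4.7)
The plan is straightforward: Corollary \ref{ap2} is a direct specialization of Corollary \ref{main2} to the terminal simplicial category $\C = \ast$ (one object, only the identity morphism). First I would unpack the identification $\widehat{\ast} \cong \sSet$: a simplicial functor from $\ast^{op}$ to $\sSet$ is nothing more than the choice of a simplicial set, since the only enrichment data is the unit map $\ast \to \Map(F,F)$, which is forced to pick out the identity. Under this identification the Yoneda image $i(\ast)$ is the terminal simplicial set $\Delta^0$, and Lemma \ref{mapping} recovers the familiar $\map_{\sSet}(\Delta^0, F) \simeq F$.

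Next I would verify that the standing hypothesis of Corollary \ref{main2}, that $X$ be trivial in the sense of Definition \ref{trivial}, is automatic here: since $\ast^{op}$ has a single object, every functor $X : \ast^{op} \to \sSet$ factors tautologically as $\ast^{op} \to \ast \to \sSet$. Thus Corollary \ref{main2} applies to \emph{any} pair $(F, X)$ of simplicial sets and yields
$$\N_\bullet \mathbf{Ext}_{\widehat{\ast}}(F, X) \sim \Map\bigl(X, \mathrm{B}Aut^h(F)\bigr),$$
with $Aut^h(F) = \map_{\widehat{\ast}}(F,F)^\star$ the submonoid of self-equivalences of a fibrant-cofibrant replacement of $F$, which is exactly the usual monoid of homotopy self-equivalences of $F$ in $\sSet$.

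The main obstacle is essentially nonexistent: all of the real work has been absorbed into Theorem \ref{main}, Corollary \ref{main2}, and Theorem \ref{wMX}. The only point that deserves a sanity check is that the ingredients used in the general proof (the projective model structure on $[\catX, \sC]$, functorial fibrant/cofibrant replacement, the degreewise right properness invoked in Lemma \ref{Global}) all reduce to their standard counterparts when $\sC = \sSet$; since $\sSet$ is a proper cofibrantly generated simplicial model category, this is immediate. One therefore obtains the corollary without any further argument, and in particular recovers the classical statement that extensions of $X$ with fiber $F$ in $\sSet$ are classified by $\mathrm{B}Aut^h(F)$.
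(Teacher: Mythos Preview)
Your proposal is correct and follows exactly the paper's own approach: the paper's proof is the one-line remark that the result is a direct consequence of Theorem~\ref{main} and Corollary~\ref{main2} upon taking $\C=\ast$. Your additional unpacking (that $\widehat{\ast}\cong\sSet$, that every object is automatically trivial, and that the model-categorical ingredients specialize correctly) is sound and simply makes explicit what the paper leaves implicit.
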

\begin{proof}
It is a direct consequence of the theorem \ref{main} and \ref{main2} if we replace $\C$ by the trivial category $\ast$. 
\end{proof}
An other application is related to the interpretation of connective cohomology theories, suppose that $E$ is an infinity loop space, then for any space $X$ we define the $n$-th cohomology group $E^{n}(X)=[X,B^{n}E]$. We should recall that $B^{n}E$ is still an infinity loop space and it is equivalent to a simplicial monoid group like. So by the main theorem \ref{main}, we deduce the following lemma:

\begin{corollary}\label{ap3}
Let $X$ and $E$ as before, then $E^{n}(X)$ is in bijection with the homotopy equivalence classes of $\mathrm{B}^{n-1}E$-principle bundles over $X$. In other words:
$$[X,\mathrm{B}^{n}E]\simeq \mathbf{Ext}^{0}_{\mathrm{B}^{n-1}E}(\mathrm{B}^{n-1}E,X),$$
where we donote by definition 
 $\pi_{0}\N_{\bullet}\mathbf{Ext}_{\mathrm{B}^{n-1}E}(\mathrm{B}^{n-1}E,X)=\mathbf{Ext}^{0}_{\mathrm{B}^{n-1}E}(\mathrm{B}^{n-1}E,X).$ 
\end{corollary}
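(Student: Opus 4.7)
The plan is to recognize this as a direct specialization of Theorem~\ref{ap1}, applied to the simplicial monoid associated with $\mathrm{B}^{n-1}E$, and then pass to $\pi_{0}$.

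First I would invoke the preparation sentence preceding the corollary: since $E$ is an infinity loop space, each iterated classifying space $\mathrm{B}^{n-1}E$ is again an infinity loop space, and therefore strictifies (in the sense already cited in the introduction) to a simplicial monoid that is group-like. Let $G := \mathrm{B}^{n-1}E$ denote this group-like simplicial monoid, and let $\mathbf{G}$ be the one-object simplicial category with $\mathbf{G}(\ast,\ast) = G$, so that the symbol $\mathbf{Ext}_{\mathrm{B}^{n-1}E}(\mathrm{B}^{n-1}E,X)$ appearing in the statement means exactly the category $\mathbf{Ext}_{\mathbf{G}}(G,X)$ of Theorem~\ref{ap1}, applied to the representable presheaf $G = \mathbf{G}(-,\ast)$.

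Next I would apply Theorem~\ref{ap1} directly. Because $G$ is group-like, the second clause of that theorem applies and gives the equivalence of simplicial sets
\begin{equation*}
\N_{\bullet}\mathbf{Ext}_{\mathbf{G}}(G,X) \;\sim\; \Map\bigl(X,\mathrm{B}G\bigr) \;=\; \Map\bigl(X,\mathrm{B}^{n}E\bigr).
\end{equation*}
(Equivalently, one can start from Theorem~\ref{main} and Corollary~\ref{main2}, which compute $\N_{\bullet}\mathbf{Ext}_{\mathbf{G}}(G,X)\sim\Map(X,\mathrm{B}Aut^{h}(G))$, and then use Lemma~\ref{mapping} together with the group-like hypothesis to identify $Aut^{h}(G)\sim G^{\star}\sim G$.)

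Finally I would take path components on both sides. On the right, $\pi_{0}\Map(X,\mathrm{B}^{n}E) = [X,\mathrm{B}^{n}E] = E^{n}(X)$ by the definition of the cohomology theory represented by $E$. On the left, $\pi_{0}\N_{\bullet}\mathbf{Ext}_{\mathrm{B}^{n-1}E}(\mathrm{B}^{n-1}E,X)$ is by definition $\mathbf{Ext}^{0}_{\mathrm{B}^{n-1}E}(\mathrm{B}^{n-1}E,X)$, and unwinding Definition~\ref{M-bundle} identifies this $\pi_{0}$ with the set of homotopy equivalence classes of $\mathrm{B}^{n-1}E$-principle bundles over $X$. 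The only genuine subtlety is the strictification step, i.e.\ justifying that $\mathrm{B}^{n-1}E$ may be replaced by an honest group-like simplicial monoid without changing the classifying space $\mathrm{B}^{n}E$; this is where I would expect the main (though standard) work to lie, and it is exactly what is being taken for granted in the discussion that introduces the corollary.
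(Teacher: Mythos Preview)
Your proposal is correct and follows essentially the same route as the paper: strictify $\mathrm{B}^{n-1}E$ to a group-like simplicial monoid, apply Theorem~\ref{ap1} in its group-like form to get $\N_{\bullet}\mathbf{Ext}_{\mathbf{G}}(G,X)\sim\Map(X,\mathrm{B}^{n}E)$, and pass to $\pi_{0}$. The paper's own proof is terser but makes exactly these moves, including the explicit remark that the strictification step is what justifies invoking Theorem~\ref{ap1}.
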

\begin{proof}
Recall that any loop space is (zig-zag) equivalent to strict simplicial monoid, thus $\mathrm{B}^{n-1}E$ is equivalent to strict simplicial monoid which is also a group like, we will denote the strict version also by 
$\mathrm{B}^{n-1}E$. Applying theorem \ref{ap1}, we conclude that  
$$\mathbf{Ext}^{0}_{\mathrm{B}^{n-1}E}(\mathrm{B}^{n-1}E,X)\simeq [X, \mathrm{B}(\mathrm{B}^{n-1}E)^{\star}]\simeq[X,\mathrm{B}^{n}E].$$
\end{proof}
In the particular case when $E$ is the an abelian group $G$, then 
$$H^{n}(X,G)\simeq \pi_{0}\N_{\bullet}\mathbf{Ext}_{\mathrm{K}(n-1,G)}(\mathrm{K}(n-1,G),X).$$
 \begin{remark}
The previous theory of extension can be developed in the topological setting up to some restrictions. Moreover, the topological formulation of theorems \ref{main}, \ref{main2}, \ref{ap1}, \ref{ap2} and lemma \ref{ap3} are still true if we replace $\C$ by an enriched category over CW-complexes, such that $X:\C^{op}\rightarrow CW\subset \Top$ is a trivial topological functor with value in the sub category of CW-complexes. If $G=\mathbb{Z}$ and $X$ a topological space, it is well known that the group $H^{2}(X,\mathbb{Z})$ is isomorphic to the set of isomorphisms  classes of $S^{1}$-principle bundles (the standard definition of principle bundles in the context of topological spaces). This result is a particular case of \ref{ap3}. 
\end{remark}

\section{Categorical model of the classifying space}\label{section3}

In this section, we should mention that we will not be very precise about the set theoretic size issues. 
The problem is solved implicitly by the formula \ref{equation}, where \textit{we compare something small with something apparently big}. The mapping space in the model category of topological (resp. simplicial)  categories $\Cat_{\Top}$ \cite{Amrani1} (resp. $\Cat_{\sSet}$ \cite{bergner}) is strongly related to the mapping space in the model category of simplicial presheaves. This fact is not trivial and goes back to the original paper \cite{toen2007}, where To{\"e}n computes the mapping space of the model category of $dg-categories$. For any (small) topological categories $\D$ (cofibrant) and $\C$, the mapping space $\map_{\Cat_{\Top}}(\D,\C)$ is equivalent to the nerve of the weak groupoid of quasi-representable (qr) topological functors  which are cofibrant-fibrant (cf) in the projective model structure \ref{projective} i.e.,  
\begin{equation}\label{equation0}
\map_{\Cat_{\Top}}(\D,\C)\sim\N_{\bullet} w~ \widehat{\D^{op}\times\C}^{qr,cf}.
\end{equation}
This equivalence is due to B. To{\"e}n (in the context of dg-categories) which we adapted for the case of topological and simplicial categories. He constructed the derived internal Hom, $\mathrm{R}\HOM$ in the model category $dg-\Cat$ which is
 $$\mathrm{R}\HOM(\D,\C)\sim \widehat{\D^{op}\otimes\C}^{qr,cf}.$$ 
 Inspired by his construction and playing with adjunction between the derived internal Hom and the derived tensor product we give a full description of the mapping space in $\Cat_{\Top}$ (resp. $\Cat_{\sSet}$),  (cf. formula \ref{equation0}).
In the case where $\D=\ast$, then we obtain a very nice formula (cf \cite{Amrani1},\cite{toen2007})
\begin{equation}\label{equation}
\widetilde{\N}\C^{\star}\sim\map_{\Cat_{\Top}}(\ast,\C)\sim \widetilde{\N}_{\bullet}\widehat{\C}^{qr,cf,\star}\sim\N_{\bullet} w~ \widehat{\C}^{qr,cf}\sim\N_{\bullet} w~ \widehat{\C}^{qr}
\end{equation}
where $\C^{\star}$ is the $\infty$-groupoid associated to $\C$, and $ \widehat{\C}^{qr}$ is the subcategory of functors which are equivalent to a representable functors $\map_{\C}(-,c)$ for some object $c$. The first  equivalence $\widetilde{\N}\C^{\star}\sim\map_{\Cat_{\Top}}(\ast,\C)$ is described in \cite{Amrani1}, the second equivalence comes from the fact that the inclusion of  $\C$ in $\widehat{\C}^{qr,cf}$ is a Dwyer-Kan equivalence. The  equivalence  $\map_{\Cat_{\Top}}(\ast,\C)\sim\N_{\bullet} w~ \widehat{\C}^{qr,cf}$ is a special case of \ref{equation0}. The last equivalence is a consequence of the fact that the cofibrant-fibrant replacement is functorial.  

\begin{lemma}\label{tech1}
Let $\A$ and $\M$ two model categories with functorial fibrant $\mathrm{R}$ and cofibrant $\mathrm{Q}$ replacement, suppose  we have a Quillen adjunction 
$  \xymatrix{G:~\A \ar@<2pt>[r]^{ } & \M~ : F, \ar@<2pt>[l]^{}}$
and let $a\in \A$ and $m\in\M$, such that 
\begin{enumerate}
\item $G\mathrm{Q}(a)\in  \M^{we}_{m}.$
\item $F\mathrm{R}(m)\in \A^{we}_{a}.$
\end{enumerate}
then there is an equivalence of simplicial sets  $\N_{\bullet}\A^{we}_{a}  \sim \N_{\bullet}\M^{we}_{m}.$
\end{lemma}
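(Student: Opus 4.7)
The plan is to identify both nerves as classifying spaces of homotopy auto-equivalence monoids via (the general form of) Theorem~\ref{wMX}, and then transfer between these monoids using the derived Quillen adjunction.

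First I would check that $G\mathrm{Q}$ and $F\mathrm{R}$ restrict to functors $\A^{we}_{a}\to\M^{we}_{m}$ and $\M^{we}_{m}\to\A^{we}_{a}$, respectively. Since $G$ is left Quillen and $\mathrm{Q}$ takes values in cofibrant objects, $G\mathrm{Q}$ preserves all weak equivalences; dually for $F\mathrm{R}$. For any $b\in\A^{we}_{a}$ we then have $G\mathrm{Q}(b)\sim G\mathrm{Q}(a)\sim m$ by hypothesis~(1), so $G\mathrm{Q}(b)\in\M^{we}_{m}$; the symmetric argument using hypothesis~(2) restricts $F\mathrm{R}$. Both restrictions send morphisms (which are weak equivalences) to weak equivalences, so they induce simplicial maps on nerves.

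Next I would invoke the general form of Theorem~\ref{wMX} (the Blomgren--Chacholski result, valid for any model category, as noted in the paper): for a model category $\mathbf{N}$ with object $n$,
$$\N_{\bullet}\mathbf{N}^{we}_{n}\sim\mathrm{B}Aut^{h}_{\mathbf{N}}(n),$$
where $Aut^{h}_{\mathbf{N}}(n)$ is the simplicial monoid of invertible components of the derived endomorphism space $\map^{h}_{\mathbf{N}}(n,n)$. Applied to $(\A,a)$ and $(\M,m)$, this reduces the desired equivalence to producing a weak equivalence $\mathrm{B}Aut^{h}_{\A}(a)\sim\mathrm{B}Aut^{h}_{\M}(m)$.

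The central step is to exhibit this last equivalence from the Quillen adjunction. Observe that $G\mathrm{Q}a$ is cofibrant in $\M$ (since $G$ is left Quillen and $\mathrm{Q}a$ is cofibrant) and weakly equivalent to $m$ by hypothesis~(1); dually, $F\mathrm{R}m$ is fibrant in $\A$ and weakly equivalent to $a$ by hypothesis~(2). Hence
$$\map^{h}_{\A}(a,a)\sim\map_{\A}(\mathrm{Q}a,F\mathrm{R}m)\sim\map_{\M}(G\mathrm{Q}a,\mathrm{R}m)\sim\map^{h}_{\M}(m,m),$$
where the middle equivalence is the Quillen adjunction on the cofibrant source $\mathrm{Q}a$ and fibrant target $\mathrm{R}m$, and the outer equivalences hold because a derived mapping space can be computed using any cofibrant-fibrant replacement of source and target. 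Since this chain is induced by the derived adjunction, it is compatible with composition (via the simplicial/Dwyer--Kan enrichment), so it restricts to an equivalence of invertible-component submonoids $Aut^{h}_{\A}(a)\sim Aut^{h}_{\M}(m)$. Applying $\mathrm{B}$ and chaining with the previous paragraph yields the desired equivalence of nerves.

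The main obstacle is justifying that the derived endomorphism-space equivalence above actually respects the composition structure, and so restricts to the auto-equivalence components rather than only matching the underlying homotopy types. In a simplicial model category this is immediate from the enriched form of the Quillen adjunction; in the generality of the lemma one has to pass to Dwyer--Kan hammock localizations (equivalently, to the $\infty$-categorical reformulation), where a Quillen adjunction becomes an adjunction of simplicial categories that automatically respects the $E_{1}$-structure on endomorphism spaces, so that invertibility on $\pi_{0}$ is preserved.
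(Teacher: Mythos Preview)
Your approach is correct in outline but takes a considerably more indirect route than the paper. The paper's proof is entirely elementary: after observing (exactly as you do) that $G\mathrm{Q}$ and $F\mathrm{R}$ restrict to functors between $\A^{we}_{a}$ and $\M^{we}_{m}$, it simply writes down zigzags of natural transformations
\[
G\mathrm{Q}F\mathrm{R}\rightarrow \mathrm{R}\leftarrow id \quad\text{and}\quad F\mathrm{R}G\mathrm{Q}\leftarrow \mathrm{Q}\rightarrow id,
\]
built from the unit, the counit, and the replacement transformations $\mathrm{Q}\to id\to\mathrm{R}$. Since natural transformations induce homotopies on nerves, this immediately gives the equivalence $\N_{\bullet}\A^{we}_{a}\sim\N_{\bullet}\M^{we}_{m}$. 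No mapping spaces, no monoid structures, no external results are needed.

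Your detour through $\mathrm{B}Aut^{h}$ works, but it imports the Blomgren--Chacholski classifying-space theorem as a black box and then, as you yourself flag, leaves the composition-compatibility of the derived adjunction to a hammock-localization or $\infty$-categorical argument that you only sketch. Two further remarks: first, within this paper Lemma~\ref{tech1} is a \emph{prerequisite} for Theorem~\ref{wMX}, so invoking \ref{wMX} itself would be circular; you avoid this only by appealing to the external general version. Second, the step you call the ``main obstacle'' is in fact the entire content of the lemma once one unwinds it at the $\infty$-categorical level --- the cleanest way to resolve it is to note that the derived adjunction restricts to an adjoint equivalence on the relevant connected components, which is precisely what the paper's zigzag argument establishes directly and elementarily at the $1$-categorical level before taking nerves.
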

\begin{proof}
Since the adjunction verify the properties (1) and (2), we can restrict the adjunction to
$$  \xymatrix{G\mathrm{Q}:~ \A^{we}_{a} \ar@<2pt>[r]^{ } & \M^{we}_{m}:~F\mathrm{R}  \ar@<2pt>[l]^{}}.$$ 
Moreover, we have a zig-zag of natural transformations:\\
$$G\mathrm{Q}F\mathrm{R}\rightarrow \mathrm{R}\leftarrow id ~ \textrm{ and } ~  F\mathrm{R}G\mathrm{Q}\leftarrow \mathrm{Q}\rightarrow id.$$
We conclude that $ \N_{\bullet}\A^{we}_{a}  \sim \N_{\bullet}\M^{we}_{m}.$
\end{proof}

\begin{theorem}\label{wMX}
For any (fibrant-cofibrant) object $F$ in $\sC$, we have an equivalence of simplicial sets $\N_{\bullet}\sC_{F}^{we}\sim\mathrm{B}Aut^{h}(F)$ where $Aut^{h}(F)$ mapping space of autoequivalence of $F$ i.e., $\map_{\sC}(F,F)^{\star}$.  
\end{theorem}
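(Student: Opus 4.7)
The plan is to build an auxiliary one-object simplicial category $\mathbf{F}$ out of $F$, set up a Quillen adjunction between $\widehat{\mathbf{F}}$ and $\sC$, and then transport the classifying-space description of formula~\eqref{equation} to a statement about $\sC_F^{we}$ by means of lemma~\ref{tech1}.

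Explicitly, let $\mathbf{F}$ denote the simplicial category with a single object $\ast$ and $\mathbf{F}(\ast,\ast)=\map_{\sC}(F,F)$. The assignment $\ast\mapsto F$ defines a simplicial functor $i:\mathbf{F}\to\sC$, and left Kan extension along $i$ fits into a simplicial Quillen adjunction
\[
i_!\,:\,\widehat{\mathbf{F}}\rightleftarrows\sC\,:\,i^{*},
\]
where $i^{*}(G)=\map_{\sC}(F,G)$ (viewed as a presheaf on $\mathbf{F}$ via composition) and $i_!$ is the enriched tensor $K\mapsto K\otimes_{\mathbf{F}} F$. The right adjoint $i^{*}$ preserves fibrations and trivial fibrations because $F$ is cofibrant, so this really is a Quillen pair. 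By the enriched Yoneda lemma, $i_!(\mathbf{F}(-,\ast))=F$ and $i^{*}(F)=\mathbf{F}(-,\ast)$; since $F$ is fibrant--cofibrant and representables are cofibrant in the projective structure on $\widehat{\mathbf{F}}$, the two hypotheses of lemma~\ref{tech1} are satisfied with $a=\mathbf{F}(-,\ast)$ and $m=F$. That lemma then yields
\[
\N_{\bullet}\widehat{\mathbf{F}}^{we}_{\mathbf{F}(-,\ast)}\sim\N_{\bullet}\sC_F^{we}.
\]

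To identify the left-hand side, I would apply formula~\eqref{equation} with $\C=\mathbf{F}$. Because $\mathbf{F}$ has a single object, the quasi-representable presheaves on $\mathbf{F}$ are precisely those equivalent to $\mathbf{F}(-,\ast)$, so $w\,\widehat{\mathbf{F}}^{qr}=\widehat{\mathbf{F}}^{we}_{\mathbf{F}(-,\ast)}$. Moreover $\mathbf{F}^{\star}$ is the one-object simplicial groupoid whose morphism space is $\map_{\sC}(F,F)^{\star}=Aut^{h}(F)$, so $\widetilde{\N}\mathbf{F}^{\star}\simeq\mathrm{B}Aut^{h}(F)$. Concatenating the equivalences gives
\[
\mathrm{B}Aut^{h}(F)\sim\widetilde{\N}\mathbf{F}^{\star}\sim\N_{\bullet}\,w\,\widehat{\mathbf{F}}^{qr}\sim\N_{\bullet}\widehat{\mathbf{F}}^{we}_{\mathbf{F}(-,\ast)}\sim\N_{\bullet}\sC_F^{we}.
\]

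The main obstacle is the clean verification of the two hypotheses of lemma~\ref{tech1} in the homotopical sense required: one needs $i_!\mathrm{Q}(\mathbf{F}(-,\ast))$ to lie in $\sC^{we}_F$ and $i^{*}\mathrm{R}(F)$ to lie in $\widehat{\mathbf{F}}^{we}_{\mathbf{F}(-,\ast)}$. The first follows because the representable is already cofibrant, $i_!$ sends it to $F$ on the nose, and $i_!$ preserves weak equivalences between cofibrants. The second follows because $F$ is assumed fibrant, so $\mathrm{R}(F)\simeq F$, and then $i^{*}\mathrm{R}(F)\simeq i^{*}(F)=\mathbf{F}(-,\ast)$ using that $i^{*}$ preserves equivalences between fibrants. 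Once these two points are in place the remaining bookkeeping---matching $w\,\widehat{\mathbf{F}}^{qr}$ with $\widehat{\mathbf{F}}^{we}_{\mathbf{F}(-,\ast)}$ and identifying $\widetilde{\N}\mathbf{F}^{\star}$ with $\mathrm{B}Aut^{h}(F)$---is formal.
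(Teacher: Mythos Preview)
Your proposal is correct and follows essentially the same route as the paper: your one-object category $\mathbf{F}$ is the paper's $\M$, the Quillen pair $(i_!,i^*)$ is exactly the paper's $\bigl(-\otimes_M F,\ \map_{\sC}(F,-)\bigr)$, and both arguments then invoke lemma~\ref{tech1} (with $a=\mathbf{F}(-,\ast)=M$, $m=F$) followed by formula~\eqref{equation} to identify $\N_\bullet w\,\widehat{\mathbf{F}}^{qr}$ with $\mathrm{B}M^\star=\mathrm{B}Aut^h(F)$. If anything, your write-up is slightly more careful in justifying why $(i_!,i^*)$ is Quillen and in checking the two hypotheses of lemma~\ref{tech1}.
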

\begin{proof}
Let denote the monoid $\map_{\sC}(F,F)$ by $M$, and let $\M$ be the simplicial category with one object and $\ast$ and $\map_{\M}(\ast,\ast)=M$. Let $\sM$ be the simplicial category of simplicial functors $[\M,\sSet]$. We have a natural Quillen adjunction 
 $$  \xymatrix{-\otimes_{M} F:~\sM \ar@<2pt>[r] & \sC:~ \map_{\sC}(F,-)\ar@<2pt>[l] }$$
 Since  $F\otimes_{M}^{\mathrm{L}}M\sim F\otimes _{M}M\simeq F$  and  $\mathrm{R}\map_{\sC}(F,F)\sim\map_{\sC}(F,F)=M$,
 we can apply lemma \ref{tech1} for $a=F$ and $m=M$, thus we conclude that 
 $$ \N_{\bullet}\sC^{we}_{F}\sim \N_{\bullet}\sM_{M}^{we}=\N_{\bullet}w\sM^{qr}$$ 
 is a weak equivalence. On an other hand, applying the formula \ref{equation} we obtain an equivalence  
 $$ \N_{\bullet}w\sM^{qr}\sim \widetilde{\N}_{\bullet} \M^{\star}\sim \mathrm{B}M^{\star}. $$
 But $M^{\star}=\map_{\sC}(F,F)^{\star}$ by definition. 
\end{proof}

\subsection{Classifying Category}
In this section, $G$ will denote a topological monoid group like i.e., $\pi_{0}G$ is a group. The underlying space $G$ is compactly generated and Hausdorff. In this paragraph, we define the classifying category $\mathbf{BG}$ of $G$. We also define in a categorical way the $\mathbf{EG}$ (which has a contractible nerve) and we construct a functor 
$\mathbf{EG}\rightarrow \mathbf{BG}$ in such a way that the realization of  the comma category (homotopy fiber) is equivalent to $G$.

\begin{definition}[$G$-space]\label{G-space}
A $G$-space $X$ is a topological space with a continues action of $G$.
\end{definition}
\begin{remark}
A $G$-space $X$ is the same thing as giving a functor $X:\mathbf{G}\rightarrow \Top$, where $\mathbf{G}$ is a topological category with one object $\ast$ and $\mathbf{G}(\ast,\ast)=G$. The category of $G-spaces$ is denoted by $\sG$. 
\end{remark}
 \begin{definition}[Categorical classifying space]\label{BGdef}
 Let $G$ a topological monoid group like, define the category $\mathbf{BG}$ as follows:
 \begin{itemize}
 \item Objects are cofibrant $G$-spaces equivalent to $G$ (in the projective model structure $\sG$).
 \item Morphisms are $G$-maps $f:X\rightarrow Y$  which are weak equivalence.
  \end{itemize}
\end{definition} 
\begin{remark}
 By definition, the category $\mathbf{BG}$ is exactly the category $w\sG^{qr,fc}$.  
\end{remark}
\begin{lemma}\label{BG}
The nerve of the category $\mathbf{BG}$ is equivalent to the classifying space $BG$,
$$ \N_{\bullet}\mathbf{BG}\sim \mathrm{B}G.$$
\end{lemma}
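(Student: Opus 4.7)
The preceding remark identifies $\mathbf{BG}$ with $w\sG^{qr,cf}$, so the statement to prove reads $\N_{\bullet} w\sG^{qr,cf} \sim \mathrm{B}G$. This puts the lemma into exactly the format of formula \ref{equation}, applied to the topological category $\mathbf{C} = \mathbf{G}$, and my plan is to reduce the whole argument to that formula.

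First, I would apply formula \ref{equation} with $\mathbf{C} = \mathbf{G}$ to produce the chain of equivalences
$$\N_{\bullet} \mathbf{BG} \;=\; \N_{\bullet} w\,\sG^{qr,cf} \;\sim\; \widetilde{\N} \mathbf{G}^{\star}.$$
All the heavy lifting, notably the cofibrant-fibrant replacement being functorial and the passage between $\map_{\Cat_{\Top}}(\ast,\mathbf{G})$ and the $\infty$-groupoid associated to $\mathbf{G}$, is already packaged there.

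Second, I would identify $\widetilde{\N}\mathbf{G}^{\star}$ with $\mathrm{B}G$. By Definition \ref{invers} adapted to the one-object category $\mathbf{G}$, the $\infty$-groupoid $\mathbf{G}^{\star}$ is the sub-category generated by the homotopy invertible endomorphisms, which is $\mathbf{G}$ itself up to equivalence precisely because $G$ is group-like (so $\pi_0 G$ is a group and $G^{\star} \simeq G$). The fat nerve of a one-object topological category with morphism monoid $G$ is the two-sided bar construction $B(\ast,G,\ast)$, which is $\mathrm{B}G$; so $\widetilde{\N}\mathbf{G}^{\star} \simeq \widetilde{\N}\mathbf{G} \simeq \mathrm{B}G$.

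As a sanity check, one can reach the same conclusion through Theorem \ref{wMX}: applied to the representable presheaf $G = \mathbf{G}(-,\ast) \in \sG$, it gives $\N_{\bullet} \sG^{we}_G \sim \mathrm{B}\mathrm{Aut}^h(G)$, and by the Yoneda lemma following Theorem \ref{projective} we have $\map_{\sG}(G,G) \simeq G$, hence $\mathrm{Aut}^h(G) = \map_{\sG}(G,G)^{\star} \sim G^{\star} \sim G$. The inclusion $\mathbf{BG} = w\sG^{qr,cf} \hookrightarrow \sG^{we}_G$ is an equivalence on nerves, since functorial cofibrant-fibrant replacement provides a homotopy inverse, so $\N_{\bullet}\mathbf{BG} \sim \mathrm{B}G$. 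The main potential obstacle is really just the size/set-theoretic caveat flagged at the start of Section \ref{section3} together with the verification that $\mathbf{G}^{\star}$ agrees on the nose with what one expects when $G$ is group-like, so that $\widetilde{\N}\mathbf{G}^{\star}$ is the honest classifying space $\mathrm{B}G$ rather than some fattened variant; once this is in hand, the lemma is a direct corollary of formula \ref{equation}.
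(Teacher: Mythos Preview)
Your proposal is correct and follows essentially the same route as the paper: apply formula \ref{equation} with $\C=\mathbf{G}$, use that $G$ is group-like so $\mathbf{G}^{\star}=\mathbf{G}$ is already an $\infty$-groupoid (hence $w\sG^{qr,cf}=\sG^{qr,cf}$), and identify $\widetilde{\N}_{\bullet}\mathbf{G}$ with $\mathrm{B}G$. Your sanity check via Theorem \ref{wMX} is extra but not a different argument.
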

\begin{proof}
Since $\mathbf{G}$ is an $\infty-groupoid$ by definition, then $\sG^{qr,fc}$ is also an $\infty-groupoid$ equivalence to $\mathbf{G}$. Thus, we apply the formula \ref{equation} and conclude that
  $\widetilde{\N}_{\bullet}\mathbf{G}\sim\N_{\bullet} \sG^{qr,fc}$,
but $\widetilde{\N}_{\bullet}\mathbf{G}$ is a model for $\mathrm{B}G$ and $\mathbf{BG}= w\sG^{qr,fc}=\sG^{qr,fc}$.
\end{proof}
\begin{definition}
The category $\mathbf{EG}$ is defined as follows:
\begin{enumerate}
\item Objects are morphisms of $G$-spaces $X\rightarrow G$  (not necessary weak equivalences), such that $X$ is a cofibrant $G$-space and weakly equivalent to $G$.
\item the morphisms $\mathbf{EG}(X\rightarrow G, Y\rightarrow G)$ are given by the commutative diagram of $G$-spaces.  

$$
 \xymatrix{
 X\ar[dr]_-{}\ar[rr]_-{\sim} & &Y\ar[dl]^-{}\\
  & \G &
 }
 $$
\end{enumerate}
\end{definition}
We should remark that there is an obvious (forgetful functor) from $U:\mathbf{EG}\rightarrow \mathbf{BG}$, which sends $X\rightarrow G$ to $X$. 
\begin{lemma}\label{fiber}
The homotopy fiber of the map $|\N_{\bullet} U|:|\N_{\bullet}\mathbf{EG}|\rightarrow|\N_{\bullet}\mathbf{BG}|$ is equivalent to $ G$. 
\end{lemma}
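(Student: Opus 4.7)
My plan is to combine three facts: $|\N_{\bullet}\mathbf{BG}| \sim \mathrm{B}G$ from Lemma \ref{BG}, the contractibility of $|\N_{\bullet}\mathbf{EG}|$ (established elsewhere in the paper, cf.\ item~2 of the introduction), and the classical equivalence $G \simeq \Omega \mathrm{B}G$ valid whenever the topological monoid $G$ is group-like. With these in hand, the lemma reduces to the basic observation that the homotopy fiber of any map out of a contractible space into a path-connected target is the based loop space of the target.

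First, I would remark that since $G$ is group-like, $\pi_{0}\mathrm{B}G$ is trivial, so $|\N_{\bullet}\mathbf{BG}|$ is path-connected and the homotopy fiber of $|\N_{\bullet} U|$ is well-defined up to weak equivalence independently of the choice of basepoint in $\mathbf{BG}$. I take the natural basepoint, namely $G$ itself equipped with the left translation action (cofibrantly replaced if necessary). Second, I would replace $|\N_{\bullet} U|$ by a Serre fibration between CW-complexes (we are in compactly generated Hausdorff spaces) and identify its strict fiber; since $|\N_{\bullet}\mathbf{EG}|$ is contractible, the formal computation $\mathrm{hofib}(\ast \to Y) \simeq \Omega Y$ applied to our pointed path-connected $Y=|\N_{\bullet}\mathbf{BG}|$ identifies this fiber with $\Omega|\N_{\bullet}\mathbf{BG}| \sim \Omega \mathrm{B}G$. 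Finally, I would invoke the classifying space recognition theorem: the unit map $G \to \Omega \mathrm{B}G$ is a weak equivalence for any group-like topological monoid $G$, concluding that the homotopy fiber is equivalent to $G$.

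The main obstacle I anticipate is basepoint-bookkeeping rather than any deep homotopical content. One must verify that the zig-zag realizing $|\N_{\bullet}\mathbf{BG}| \sim \mathrm{B}G$ in the proof of Lemma \ref{BG}, which factors through $\widetilde{\N}_{\bullet}\mathbf{G}$ via formula \ref{equation}, sends the natural basepoint of $\mathbf{BG}$ (the representable $G$-space) to the canonical basepoint of $\mathrm{B}G$, so that the subsequent identification $\Omega \mathrm{B}G \simeq G$ is the canonical one and not some twisted variant.

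An alternative route, which sidesteps the loop-space manipulation at the cost of more combinatorial work, is to invoke Quillen's Theorem~B directly. One would compute the comma category $U \downarrow G$ and check that its nerve is equivalent to $G$; the payoff is a direct model of the fiber, but one still has to verify the hypothesis of Theorem~B, namely that every morphism in $\mathbf{BG}$ (which is a weak equivalence by construction of $\mathbf{BG}$) induces a weak equivalence on the corresponding comma-category nerves. Either route yields $\mathrm{hofib}(|\N_{\bullet} U|) \sim G$.
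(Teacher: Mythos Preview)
Your primary argument is circular. You invoke the contractibility of $|\N_{\bullet}\mathbf{EG}|$ as input, citing ``item~2 of the introduction'', but that item points to Corollary~\ref{EG}, whose proof in the paper reads: ``It is an easy consequence of \ref{fiber} and \ref{BG} and Serre's long exact sequence in homotopy.'' In other words, the paper deduces $\N_{\bullet}\mathbf{EG}\sim\ast$ \emph{from} Lemma~\ref{fiber}, not the other way around. So your main route---contractible total space, hence fiber $\simeq \Omega\mathrm{B}G \simeq G$---assumes what is to be proved. Unless you supply an independent proof that $\N_{\bullet}\mathbf{EG}$ is contractible (e.g.\ by exhibiting an initial/terminal object or an explicit contracting homotopy, neither of which is immediate from the definition of $\mathbf{EG}$), this approach does not stand on its own.

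The ``alternative route'' you sketch in your final paragraph is in fact exactly the paper's proof. The paper applies Quillen's Theorem~B: it identifies the comma category $\mathbf{EG}_{\downarrow X}$ with a category of zig-zags $G \leftarrow Z \xrightarrow{\sim} X$ in $\mathbf{BG}$, invokes \cite{dugger2006} to recognize its nerve as the mapping space $\map_{\sG}(X,G)$, and then checks the Theorem~B hypothesis by noting that every morphism in $\mathbf{BG}$ is a weak equivalence between cofibrant objects, hence induces a weak equivalence on $\map_{\sG}(-,G)$. Finally $\map_{\sG}(X,G)\sim\map_{\sG}(G,G)\sim G$. You should promote this from an afterthought to the actual argument.
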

\begin{proof}
Using the Quillen theorem B, it is enough to study the homotopy type of the comma category $\mathbf{EG}_{\downarrow X}$, where $X\in\mathbf{BG}$. We will show that $\N_{\bullet} \mathbf{EG}_{\downarrow X}$ is equivalent to the mapping space $\map_{\sG}(\G,\G)$ in the projective  model structure \ref{projective}. More precisely, the comma category $\mathbf{EG}_{\downarrow X}$ is described as follows:
\begin{itemize}
\item The objects are maps $h: UY\rightarrow X$ in $\mathbf{BG}$. Where $Y$ is an object of 
$\mathbf{EG}$. 
\item A morphism between  $h: UY\rightarrow X$ and $h': UY'\rightarrow X$, is a morphism $f:Y\rightarrow Y^{'}$ in $\mathbf{EG}$ such that the following diagram commutes:
$$
 \xymatrix{
 UY\ar[rr]^-{Uf}_-{\sim}\ar[dr]_-{h}^-{\sim} & &UY'\ar[dl]^-{h'}_-{\sim}\\
  & X &
 }
 $$
\end{itemize}
It is easy to see that the category $\mathbf{EG}_{\downarrow X}$ is isomorphic to the category where 

\begin{itemize}
\item Objects are zigzag maps in $\mathbf{BG}$ of the form $\xymatrix { G & Z\ar[l]\ar[r]^-{\sim} &X}$.
\item Morphisms are maps $Z\rightarrow Y$ such that the following diagram commutes:
$$
 \xymatrix{
 G& Z\ar[r]^-{\sim}\ar[l]_-{}\ar[d]^-{\sim} & X\\
  & Y\ar[ul]^-{}\ar[ur]_-{\sim} &
 }
 $$
  
\end{itemize}
By \cite{dugger2006}, the nerve of the comma category $\mathbf{EG}_{\downarrow X}$ is equivalent to the mapping space $ \map_{\sG}(X,G)$ in the projective model structure defined in \ref{projective}. On an other hand any weak equivalence $X\rightarrow Y$ of $G$-spaces induces a weak equivalence $\map_{\sG}(Y,G)\rightarrow\map_{\sG}(X,G)$ because $X, ~Y$ are cofibrant and $G$ is fibrant as a $\G-space$. Since there is a weak equivalence $X\rightarrow G$ of $G$-spaces by definition, we conclude that
$$|\map_{\sG}(X,G)|\sim|\map_{\sG}(G,G)|\sim G,$$
for any $X\in \mathbf{BG}$. 
\end{proof}
\begin{corollary}\label{EG}
The nerve of the category $\mathbf{EG}$ is contractible. 
\end{corollary}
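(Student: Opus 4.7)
The plan is to combine the two preceding lemmas and then exploit the group-like hypothesis on $G$ to force contractibility via the long exact sequence of a fibration.

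By lemma \ref{BG} the base $|\N_\bullet \mathbf{BG}|$ is a model for $\mathrm{B}G$, and by lemma \ref{fiber} the comma-category analysis (via Quillen's Theorem B, invoked there) shows that the induced map
$$|\N_\bullet U|\colon |\N_\bullet \mathbf{EG}|\longrightarrow |\N_\bullet \mathbf{BG}|$$
is a quasi-fibration whose homotopy fiber over each object of $\mathbf{BG}$ is weakly equivalent to $G$. First, I would record this as an honest homotopy fibration sequence
$$G\longrightarrow |\N_\bullet \mathbf{EG}|\longrightarrow \mathrm{B}G,$$
observing that the fiber is identified consistently for every choice of base object in $\mathbf{BG}$ (any two objects are connected by a zig-zag of weak equivalences, so the fiber is well-defined up to canonical equivalence).

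Next I would pass to the associated long exact sequence of homotopy groups. The key input is the group-like hypothesis on $G$: since $\pi_0 G$ is a group, the canonical map $G\to\Omega \mathrm{B}G$ is a weak equivalence (this is the classical James/Milgram/Segal recognition of $\mathrm{B}G$ for a group-like monoid). Consequently the connecting homomorphism $\partial\colon \pi_{n+1}(\mathrm{B}G)\to \pi_n(G)$ is an isomorphism for every $n\geq 0$. Exactness of
$$\pi_{n+1}(\mathrm{B}G)\stackrel{\partial}{\longrightarrow}\pi_n(G)\longrightarrow \pi_n(|\N_\bullet \mathbf{EG}|)\longrightarrow \pi_n(\mathrm{B}G)\stackrel{\partial}{\longrightarrow}\pi_{n-1}(G)$$
then forces $\pi_n(|\N_\bullet \mathbf{EG}|)=0$ for all $n\geq 1$; for $n=0$ one argues directly that the base $\mathrm{B}G$ is connected and the fiber $G$ surjects onto $\pi_0$, so $|\N_\bullet \mathbf{EG}|$ is connected as well.

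The main obstacle I anticipate is simply bookkeeping about base points and the identification of the connecting map with the equivalence $G\simeq \Omega \mathrm{B}G$; everything else is a direct consequence of the two preceding lemmas. An alternative, more structural, route would be to identify $|\N_\bullet \mathbf{EG}|$ directly with a concrete model of the universal $G$-bundle total space (using the forgetful-free adjunction underlying $\mathbf{EG}$), but the long exact sequence argument above is shorter and uses only what has already been established.
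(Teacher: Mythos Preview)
Your proposal is correct and follows exactly the same approach as the paper's own proof, which simply cites lemmas \ref{fiber} and \ref{BG} together with Serre's long exact sequence in homotopy. You have spelled out the details (including the group-like hypothesis ensuring $G\simeq\Omega\mathrm{B}G$ and hence that the connecting map is an isomorphism) that the paper leaves implicit, and you correctly flag the one piece of bookkeeping the paper also glosses over.
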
 
\begin{proof}
It is an easy consequence of \ref{fiber} and \ref{BG} and Serre's long exact sequence in homotopy.
\end{proof}

\bibliographystyle{plain} 
\bibliography{modulifib}

\end{document}